\numberwithin{equation}{section}
\newtheorem{Definition}{Definition}[section]
\newtheorem{theorem}[Definition]{Theorem}
\newtheorem{lemma}[Definition]{Lemma}
\newtheorem{corollary}[Definition]{Corollary}
\newtheorem{example}[Definition]{Example}
\begin{document}

\title{\Large \bf  Interior ideal in regular and intra regular semigroup}
\author{Susmita Mallick   \\
\footnotesize{Department of Mathematics, Visva-Bharati
University,}\\
\footnotesize{Santiniketan, Bolpur - 731235, West Bengal, India}\\
\footnotesize{mallick.susmita11@gmail.com}}

\date{}
\maketitle

\begin{abstract}

Following G.Szasz \cite{SZASZ2} a subsemigroup $I$ of semigroup $S$
is called an interior ideal if $SIS\subseteq I$. In this paper we
explore the classes of regular semigroup and its different
subclasses by their interior ideals. Futhermore we introduce the
strongly prime, prime, semiprime, strongly irreducible and
irreducible interior ideals of semigroups and also characterize
those semigroups for which each interior ideal is strongly prime.
Some important interplay between the classes of all interior ideals
and other ideals are given here. In addition to this we present
different characterizations of semigroups by their minimal interior
ideals.

\end{abstract}
{\it Keywords and Phrases:} Interior ideal; Prime; Strongly prime;
Semiprime; Strongly irreducible and irreducible interior ideals;
Interior-simple.
\\{\it 2010 Mathematics Subject Classification:}  20M10;06F05.

\section{Introduction and preliminaries}

Like ring theory, ideals play a significant role in the theory of
semigroups. The classes of ideals in semigroup theory is not just a
generalization of ideals in ring theory. In some cases ideals in
semigroups failed to play the similar role of the ideals in ring
theory. These obstacles have been forcing the semigroup theorists to
study different kind of ideals to characterize semigroups. Interior
ideal is one of them. The concept of interior ideal in a semigroup
has been introduced by S. Lajos \cite{LAJOS1}. G.
Szasz\cite{SZASZ1},\cite{SZASZ2} has a significant contribution in
the study of interior ideals in semigroups. This work is highly
motivated by the works of G.Szasz \cite{SZASZ1},\cite{SZASZ2}.

Let $(S,\cdot)$ be a semigroup. By a subsemigroup of $S$ we mean a
non-empty subset $A$ of $S$ such that $AA\subseteq A$. A non-empty
subset $A$ of $S$ is left(resp. right) ideal of $S$ if $SA\subseteq
A$ (resp. $AS\subseteq A$). An ideal of $S$ is a non-empty subset of
$S$ which is both left as well as right ideal. Following Lajos
\cite{S. Lajos 1}, a subsemigroup $B$ of $S$ is a bi-ideal of $S$ if
$BSB\subseteq B$ and a subsemigroup $Q$ of $S$ is a quasi-ideal if
$QS\cap SQ\subseteq Q$. A subsemigroup  $I$ of a semigroup $S$ is
called an interior ideal \cite{SZASZ2} of $S$ if $SIS\subseteq I$. A
regular semigroup \cite{Howie} is a semigroup $S$ in which every
element is regular that is for each element $a\in S$ there exist an
element $x\in S$ such that $a=axa$ that is $a\in aSa$. $S$ is said
to be intra-regular if for each element $a\in S$, $a\in Sa^{2}S$.
$S$ is called duo semigroup \cite{Lajos3} if every one-sided ideal
of $S$ is two sided. For any element $a$ in a semigroup $S$, the
smallest left ideal, right ideal, ideal of $S$ containing $a$ is
$L(a)=Sa\cup {a}$,$R(a)=aS\cup {a}$, $I(a)= {a}\cup Sa\cup aS\cup
SaS$ called respectively the principal left ideal, principal right
ideal, principal ideal of $S$ generated by $a$. In \cite{SZASZ2} G.
Szasz defined principal interior ideal of $S$ generated by an
element $a$ in $S$ denoted by IN$(a)$ where IN$(a)={a}\cup a^{2}\cup
SaS$. For any fundamental query of semigroup theory one can go
through the following book \cite{Howie}.

\begin{lemma}\label{ideal}
Every ideal is an interior ideal of a semigroup.
\end{lemma}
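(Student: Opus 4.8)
The plan is to verify the two defining conditions of an interior ideal directly from the definition of an ideal, with no auxiliary machinery needed. Let $I$ be an ideal of $S$, so that $I$ is non-empty and satisfies both $SI\subseteq I$ and $IS\subseteq I$. First I would check that $I$ is a subsemigroup: since $I\subseteq S$, we have $II\subseteq SI\subseteq I$, so $I$ is closed under the multiplication of $S$ and is therefore a subsemigroup. This step matters because the definition of interior ideal recalled in the preliminaries requires a subsemigroup, not merely a subset.

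Next I would establish the interior-ideal condition $SIS\subseteq I$. Associativity of $S$ lets us write $SIS=(SI)S$, and then applying $SI\subseteq I$ followed by $IS\subseteq I$ yields $SIS=(SI)S\subseteq IS\subseteq I$. Combining this with the previous paragraph shows that $I$ is a subsemigroup of $S$ with $SIS\subseteq I$, i.e.\ an interior ideal, which is exactly the assertion of the lemma.

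I do not anticipate any genuine obstacle: the statement follows immediately from unwinding the definitions, the only mild subtlety being the remark that every ideal is automatically a subsemigroup. If desired, one could append a sentence noting that the converse is false in general (an interior ideal need not be a one-sided ideal), so that the inclusion of the class of ideals into the class of interior ideals is strict; but this is not required for the proof of the lemma and I would leave it to a later remark or example.
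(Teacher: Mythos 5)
Your proof is correct and is the standard argument: the paper itself states this lemma without proof, and your verification (that $II\subseteq SI\subseteq I$ makes $I$ a subsemigroup, and $SIS=(SI)S\subseteq IS\subseteq I$ gives the interior-ideal condition) is exactly the routine unwinding of definitions the author evidently had in mind. Your closing remark about the converse failing matches the paper, which follows the lemma with an explicit four-element counterexample.
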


But the converse is not true.
\begin{example}
Consider the following semigroup $S=\{a,b,c,d\}$. Define a binary
operation '$\cdot$' on $S$ as follows:

\begin{tabular}{|c|c|c|c|c|}
  \hline

  . & a & b & c & d  \\ \hline
  a & a & c & b & d  \\ \hline
  b & c & d & d & d  \\ \hline
  c & b & d & d & d  \\ \hline
  d & d & d & d & d  \\

  \hline

\end{tabular}

Here $\{b,d\}$ and $\{c,d\}$ are interior ideals of $S$ but none of
them neither left ideal nor right ideal of $S$.

\end{example}

This makes the study of interior ideal interesting despite of a
revolutionary study of the theory of ideals in last five decades.
The notion of interior ideals is not new, it is significant
contribution which was initiated by various authors, specially G.
Szasz \cite{SZASZ1}, \cite{SZASZ2}. But the works of G. Szasz
\cite{SZASZ1}, \cite{SZASZ2} made us to find a different way to
study interior-ideals again. Our works enable one to see some
important characterizations of semigroups by their interior ideals.

\begin{Definition}
 An  interior ideal $I$ of $S$ is semiprime if for any interior
ideal $A$ of $S$, $A^{2}\subseteq I$ implies $A\subseteq I$. Whereas
An interior ideal $I$ of a semigroup $S$ is completely semiprime if
for any $a\in S$, $a^{2}\in I$ implies $a\in I$. An interior ideal
$I$ of a semigroup $S$ is called a prime(strongly prime) interior
ideal of $S$ if $I_{1}I_{2}\subseteq I(I_{1}I_{2}\cap
I_{2}I_{1}\subseteq I)$ implies $I_{1}\subseteq I$ or
$I_{2}\subseteq I$ for any two interior ideals $I_{1}, I_{2}$ of
$S$.

\end{Definition}

\begin{Definition}
An interior ideal $I$ of a semigroup $S$ is called an irreducible
(strongly irreducible) interior ideal if for any two interior ideals
$I_{1}, I_{2}$ of $S$, $I_{1}\cap I_{2}= I$($I_{1}\cap
I_{2}\subseteq I$) implies $I_{1}=I$ or $I_{2}=I$($I_{1}\subseteq I$
or $I_{2}\subseteq I$).

\end{Definition}

Every strongly prime interior ideal is a prime interior ideal of
$S$. Every  prime interior ideal is a semiprime interior ideal of
$S$ but a prime interior ideal is not necessarily strongly prime and
a semiprime interior ideal is not necessarily prime.

\begin{example}
Consider the following semigroup $S=\{a,b,c\}$. Define a binary
operation '$\cdot$' on $S$ as follows:

\begin{tabular}{|c|c|c|c|}
  \hline

  . & a & b & c \\ \hline
  a & a & a & a \\ \hline
  b & a & b & a \\ \hline
  c & a & a & c \\
  \hline
\end{tabular}

Interior ideals of $S$ are $\{a\}, \{a,b\}, \{a,c\}$ and
$\{a,b,c\}$.

Here $\{a,b,c\}$ is strongly prime interior ideal hence prime and
semiprime ideal but $\{a\}$ is not strongly prime since
$\{a,b\}\{a,c\}\cap \{a,c\}\{a,b\}\subseteq \{a\}$ but neither
$\{a,b\}\subseteq \{a\}$ nor $\{a,c\}\subseteq \{a\}$.
\end{example}

\begin{lemma}\label{RL}\cite{Lajos2}
Let $S$ be a semigroup. $S$ is regular if and only if $R\cap L=RL$
for every right ideal $R$ and left ideal $L$ of $S$.
\end{lemma}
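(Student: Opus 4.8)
The plan is to prove both implications directly, the load-bearing elementary observation being that the inclusion $RL\subseteq R\cap L$ holds in \emph{every} semigroup: since $L\subseteq S$ and $R$ is a right ideal, $RL\subseteq RS\subseteq R$, and symmetrically $RL\subseteq SL\subseteq L$, whence $RL\subseteq R\cap L$. Thus in both directions only the reverse containment, or its consequence, is actually at stake, and regularity is exactly what supplies it.

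For the forward implication, assume $S$ is regular and fix a right ideal $R$ and a left ideal $L$. Given $a\in R\cap L$, regularity supplies $x\in S$ with $a=axa$. Reading this as $a=(ax)\,a$ with $ax\in RS\subseteq R$ and $a\in L$ exhibits $a\in RL$; hence $R\cap L\subseteq RL$, and combined with the universal inclusion above, $R\cap L=RL$.

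For the converse, suppose $R\cap L=RL$ for all right ideals $R$ and left ideals $L$, and take an arbitrary $a\in S$. Apply the hypothesis to the principal right ideal $R(a)=aS\cup\{a\}$ and the principal left ideal $L(a)=Sa\cup\{a\}$ (these are right, resp.\ left, ideals, as recorded in the preliminaries). Since $a\in R(a)\cap L(a)=R(a)L(a)$, expanding the set product $(aS\cup\{a\})(Sa\cup\{a\})=aS^2a\cup aSa\cup\{a^2\}$ and absorbing $aS^2a\subseteq aSa$ gives $R(a)L(a)=aSa\cup\{a^2\}$. So either $a\in aSa$, and we are done, or $a=a^2$; in the latter case $a^3=a$, so $a=a\,(a)\,a\in aSa$ as well. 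Hence every $a$ lies in $aSa$, i.e.\ $S$ is regular.

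The only genuinely delicate point is the stray element $a^2$ appearing in $R(a)L(a)$, which arises precisely because $S$ need not possess an identity: one cannot simply write $a=a1a$, so $a$ itself must be recovered as a threefold product. The remedy is the one-line computation $a=a^2\Rightarrow a^3=a$, converting the exponent-two relation into the needed expression $a=a(a)a$; once this is in hand, everything else is purely formal manipulation of principal one-sided ideals and the universal inclusion from the first paragraph.
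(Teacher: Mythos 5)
Your proof is correct. Note that the paper does not actually prove this lemma: it is quoted with a citation to Lajos's characterization of regular semigroups, so there is no internal argument to compare against. Your argument is the standard one: the inclusion $RL\subseteq R\cap L$ is universal, regularity gives the reverse inclusion via $a=(ax)a$, and the converse is obtained by specializing to the principal one-sided ideals $R(a)=aS\cup\{a\}$ and $L(a)=Sa\cup\{a\}$. You correctly handle the only subtlety, namely the absence of an identity, which forces the extra element $a^2$ into $R(a)L(a)$; your observation that $a=a^2$ implies $a=a(a)a\in aSa$ closes that case cleanly.
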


\begin{lemma}\label{quasi-ideal}\cite{S. Lajos}
Let $S$ be a semigroup and $Q$ be a quasi-ideal of $S$. $S$ is
regular if and only if $QSQ=Q$.
\end{lemma}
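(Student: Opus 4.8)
The plan is to prove both implications directly from the definitions, using little more than the closure properties of quasi-ideals.

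For the forward direction, assume $S$ is regular and let $Q$ be an arbitrary quasi-ideal. First I would record the inclusion $QSQ\subseteq Q$, which needs no regularity: since $SQ\subseteq S$ we get $QSQ\subseteq QS$, and since $QS\subseteq S$ we get $QSQ\subseteq SQ$; hence $QSQ\subseteq QS\cap SQ\subseteq Q$. For the reverse inclusion, take any $a\in Q$; regularity gives $x\in S$ with $a=axa$, so $a=a\cdot x\cdot a\in QSQ$. Thus $Q\subseteq QSQ$ and equality holds.

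For the converse, assume $QSQ=Q$ for every quasi-ideal $Q$ and fix $a\in S$; the goal is $a\in aSa$. The key step is choosing the right quasi-ideal: set $Q=R(a)\cap L(a)$, where $R(a)=\{a\}\cup aS$ and $L(a)=\{a\}\cup Sa$ are the principal right and left ideals generated by $a$. Since the intersection of a right ideal with a left ideal is always a quasi-ideal (one checks $(R\cap L)S\cap S(R\cap L)\subseteq RS\cap SL\subseteq R\cap L$) and $a\in Q$, the hypothesis applies: $a\in Q=QSQ$, so $a=q_{1}sq_{2}$ with $q_{1},q_{2}\in Q$ and $s\in S$. Now $q_{1}\in R(a)$ means $q_{1}=a$ or $q_{1}=au$ for some $u\in S$, and $q_{2}\in L(a)$ means $q_{2}=a$ or $q_{2}=va$ for some $v\in S$; running through the four cases, in each one $a=q_{1}sq_{2}$ rewrites as $a$ sandwiched around an element of $S$ (for instance $a=(au)s(va)=a(usv)a$), so $a\in aSa$. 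Hence $S$ is regular.

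There is no serious obstacle here; the only thing to watch is the bookkeeping in the converse — verifying that the chosen set $Q$ is genuinely a quasi-ideal containing $a$, and then handling the small disjunction coming from the two generators in $R(a)$ and $L(a)$. As an alternative route for the converse one can appeal to Lemma~\ref{RL}: for any right ideal $R$ and left ideal $L$ the set $Q=R\cap L$ is a nonempty quasi-ideal, whence $R\cap L=QSQ=(R\cap L)S(R\cap L)\subseteq RSL\subseteq RL\subseteq R\cap L$, so $RL=R\cap L$ and regularity follows immediately.
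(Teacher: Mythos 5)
Your proof is correct. Note that the paper itself offers no proof of this lemma: it is stated as a quoted result of S.~Lajos, so there is no in-paper argument to compare against. Your forward direction ($QSQ\subseteq QS\cap SQ\subseteq Q$ always, and $a=axa\in QSQ$ under regularity) is the standard one, and your converse via $Q=R(a)\cap L(a)=(\{a\}\cup aS)\cap(\{a\}\cup Sa)$ with the four-case rewriting of $a=q_{1}sq_{2}$ is a complete, self-contained argument; the only cosmetic gap is that the paper's definition of quasi-ideal requires $Q$ to be a subsemigroup, which you should note follows for free from $QQ\subseteq QS\cap SQ\subseteq Q$. Your alternative converse through Lemma~\ref{RL} ($R\cap L=(R\cap L)S(R\cap L)\subseteq RL\subseteq R\cap L$) is also valid and is arguably closer in spirit to how the surrounding results in the paper are derived.
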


\begin{lemma}\label{bi-ideal}\cite{S. Lajos}
A semigroup $S$ is regular if and only if  $BSB=B$ for each bi-ideal
$B$ of $S$.
\end{lemma}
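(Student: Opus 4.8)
The plan is to prove the two implications separately; the forward direction is essentially immediate from the definition of regularity, while the converse requires producing a suitable bi-ideal.

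For necessity, assume $S$ is regular and let $B$ be an arbitrary bi-ideal of $S$. The inclusion $BSB\subseteq B$ holds by the very definition of a bi-ideal, so it suffices to prove $B\subseteq BSB$. Given $b\in B$, regularity supplies $x\in S$ with $b=bxb$; since $b\in B$, $x\in S$, and $b\in B$, this already exhibits $b$ as an element of $BSB$. Hence $B=BSB$.

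For sufficiency, assume $BSB=B$ for every bi-ideal $B$ of $S$ and fix $a\in S$; the goal is to show $a\in aSa$. The key step is to consider the set $B(a)=\{a\}\cup\{a^{2}\}\cup aSa$, which is the principal bi-ideal generated by $a$. First I would check that $B(a)$ is a subsemigroup: this is a short case verification using identities such as $a^{2}a=aaa\in aSa$ and $(asa)(ata)=a(sa^{2}t)a\in aSa$. Next, writing $S^{1}=S\cup\{1\}$ and observing that $B(a)\subseteq aS^{1}$ and $B(a)\subseteq S^{1}a$, one obtains $B(a)\,S\,B(a)\subseteq (aS^{1})\,S\,(S^{1}a)=a\,(S^{1}SS^{1})\,a=aSa\subseteq B(a)$, so $B(a)$ is indeed a bi-ideal. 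Applying the hypothesis gives $a\in B(a)=B(a)SB(a)\subseteq aSa$, which is exactly regularity of $S$ at $a$. Since $a$ was arbitrary, $S$ is regular.

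The only delicate point is the sufficiency direction, and within it the routine but slightly fiddly check that $B(a)$ is a subsemigroup and that $B(a)SB(a)$ collapses into $aSa$; the $S^{1}$ bookkeeping above is designed to make this transparent and to avoid an enumeration of products of the generators $a$, $a^{2}$, and elements of $aSa$. I would also remark that this result sits naturally beside Lemma \ref{quasi-ideal} and Lemma \ref{RL}: it can alternatively be derived from Lemma \ref{quasi-ideal} once one notes that in a regular semigroup every bi-ideal is in fact a quasi-ideal, but the direct argument sketched here is shorter and self-contained.
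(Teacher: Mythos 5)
Your proof is correct. Note that the paper itself gives no proof of this lemma --- it is quoted from S.~Lajos as a known result --- so there is nothing to compare against; your argument (trivial necessity from $b=bxb\in BSB$, and sufficiency via the principal bi-ideal $B(a)=\{a\}\cup\{a^{2}\}\cup aSa$ together with the hypothesis $B(a)=B(a)SB(a)\subseteq aSa$) is the standard self-contained one and all the verifications go through. One small caveat on your closing remark: the derivation from Lemma~\ref{quasi-ideal} needs the fact that every quasi-ideal is a bi-ideal in an \emph{arbitrary} semigroup (so that the hypothesis applies to quasi-ideals), not the converse statement for regular semigroups that you cite, since in the sufficiency direction regularity is not yet available.
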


\begin{lemma}\label{left-quasi}\cite{MOIN}
Let $S$ be a semigroup. Every right(left) ideal of $S$ is a
quasi-ideal of $S$.
\end{lemma}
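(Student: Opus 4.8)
The plan is to unwind the definitions directly: a quasi-ideal of $S$ is by definition a subsemigroup $Q$ with $QS\cap SQ\subseteq Q$, so for a right ideal $R$ I need to check both the subsemigroup property and the quasi-ideal inclusion, and then observe that the case of a left ideal follows by the left-right dual argument.

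First I would fix a right ideal $R$ of $S$, so that $RS\subseteq R$ holds by definition. To see that $R$ is a subsemigroup, I use $R\subseteq S$ to get $RR\subseteq RS\subseteq R$. For the quasi-ideal condition I simply intersect down: $RS\cap SR\subseteq RS\subseteq R$. Hence $R$ is a quasi-ideal of $S$. For a left ideal $L$, with $SL\subseteq L$, the same two lines run with the sides interchanged: $LL\subseteq SL\subseteq L$ shows $L$ is a subsemigroup, and $LS\cap SL\subseteq SL\subseteq L$ gives the quasi-ideal inclusion.

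There is no genuine obstacle here; the statement is essentially immediate from the definitions. The only point worth being careful about is that the definition of quasi-ideal adopted in this paper already incorporates the requirement that $Q$ be a subsemigroup, so that one-line verification ($RR\subseteq R$, resp.\ $LL\subseteq L$) should be stated explicitly rather than taken for granted.
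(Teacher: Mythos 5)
Your argument is correct and is the standard one: the paper itself states this lemma without proof (citing \cite{MOIN}), and the direct verification you give --- $RR\subseteq RS\subseteq R$ for the subsemigroup property and $RS\cap SR\subseteq RS\subseteq R$ for the quasi-ideal condition, with the dual for left ideals --- is exactly the intended immediate consequence of the definitions. Your remark that the subsemigroup requirement must be checked explicitly, since the paper's definition of quasi-ideal builds it in, is a worthwhile point of care.
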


\begin{theorem}\cite{S. Lajos}\label{bi-quasi}
Let $S$ be a regular semigroup. Then every bi-ideal of $S$ is a
quasi-ideal, and conversely.

\end{theorem}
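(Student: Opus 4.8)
The plan is to prove the two inclusions separately, and at the outset to notice that only one of the two directions actually needs regularity. So I would split the statement into: (i) every quasi-ideal of $S$ is a bi-ideal, and (ii) every bi-ideal of $S$ is a quasi-ideal, and prove (ii) using the hypothesis that $S$ is regular while observing that (i) is hypothesis-free.

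For direction (i): let $Q$ be a quasi-ideal of $S$. Then $QQ\subseteq QS\cap SQ\subseteq Q$, so $Q$ is a subsemigroup, and $QSQ\subseteq QS\cap SQ\subseteq Q$, so $Q$ is a bi-ideal. No assumption on $S$ is used here, so this part is immediate.

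For direction (ii): let $B$ be a bi-ideal of the regular semigroup $S$. Since $B$ is by definition a subsemigroup, it remains only to verify the quasi-ideal condition $BS\cap SB\subseteq B$. Take $a\in BS\cap SB$ and write $a=b_{1}s_{1}=s_{2}b_{2}$ with $b_{1},b_{2}\in B$ and $s_{1},s_{2}\in S$. By regularity there is $x\in S$ with $a=axa$; substituting the two representations into the outer factors gives $a=axa=(b_{1}s_{1})x(s_{2}b_{2})=b_{1}(s_{1}xs_{2})b_{2}\in BSB\subseteq B$. Hence $BS\cap SB\subseteq B$ and $B$ is a quasi-ideal, which completes the proof. (Alternatively, one can read off $a\in BSB=B$ directly from Lemma~\ref{bi-ideal}.)

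The only real content — and the single step I would be careful about — is the rewriting in direction (ii): regularity is exactly what lets one express an arbitrary element of $BS\cap SB$ as a product whose outermost factors both lie in $B$, after which the defining inclusion $BSB\subseteq B$ of a bi-ideal does the rest. I do not anticipate any genuine obstacle beyond setting up this substitution cleanly; the two-sided representation $a=b_{1}s_{1}=s_{2}b_{2}$ must be used simultaneously, one equation for each outer occurrence of $a$ in $a=axa$.
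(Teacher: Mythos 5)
Your argument is correct: direction (i) follows immediately from $QQ\subseteq QS\cap SQ\subseteq Q$ and $QSQ\subseteq QS\cap SQ\subseteq Q$, and in direction (ii) the simultaneous use of the two representations $a=b_{1}s_{1}=s_{2}b_{2}$ inside $a=axa$ to land in $BSB\subseteq B$ is exactly the standard (Lajos) proof of this equivalence. The paper itself states this theorem only as a citation to \cite{S. Lajos} and supplies no proof, so there is nothing internal to compare against; your write-up fills that gap correctly, and you are right that regularity is needed only for the bi-ideal $\Rightarrow$ quasi-ideal direction.
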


\begin{theorem}\label{duo}\cite{S. Lajos 1}
Let $S$ be a semigroup. If $S$ is regular duo then every bi-ideal of
$S$ is an ideal of $S$.
\end{theorem}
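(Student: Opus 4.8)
The plan is to prove, for an arbitrary bi-ideal $B$ of $S$, the two inclusions $SB\subseteq B$ and $BS\subseteq B$; since $B$ is by hypothesis a subsemigroup of $S$, these inclusions are exactly what is needed to conclude that $B$ is a two-sided ideal of $S$.

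First I would record two elementary observations about the auxiliary sets $BS$ and $SB$. Both are nonempty, and $(BS)S=B(SS)\subseteq BS$ while $S(SB)=(SS)B\subseteq SB$, so $BS$ is a right ideal of $S$ and $SB$ is a left ideal of $S$. Now the duo hypothesis enters: every one-sided ideal of $S$ is two-sided, hence both $BS$ and $SB$ are ideals of $S$. In particular $S(BS)\subseteq BS$, i.e.\ $SBS\subseteq BS$, and $(SB)S\subseteq SB$, i.e.\ $SBS\subseteq SB$.

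Next I would bring in regularity. By Lemma~\ref{bi-ideal}, since $S$ is regular we have $BSB=B$. Combining this identity with the two inclusions of the previous step and associativity of the product of subsets of $S$ gives
\[
SB=S(BSB)=(SBS)B\subseteq (BS)B=BSB=B,
\]
and symmetrically
\[
BS=(BSB)S=B(SBS)\subseteq B(SB)=BSB=B .
\]
Thus $SB\subseteq B$ and $BS\subseteq B$, so $B$ is an ideal of $S$, which completes the proof.

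Since the argument is a short chain of set-inclusions, there is no single genuinely hard step; the point that requires care is invoking the duo property at exactly the right place — on the auxiliary one-sided ideals $BS$ and $SB$ rather than on $B$ itself — and then using the regular identity $BSB=B$ to absorb the extra factor of $S$. One could instead argue elementwise, writing $b=bxb$ for each $b\in B$ by regularity and pushing $sb$ or $bs$ into $BSB$, but that route is more fiddly and in the end relies on the same two ingredients.
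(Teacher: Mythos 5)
The paper states this theorem without proof, citing Lajos, so there is no in-paper argument to compare yours against. Your proof is correct and self-contained given the paper's Lemma~\ref{bi-ideal}: $BS$ and $SB$ are indeed one-sided ideals, the duo hypothesis upgrades each to a two-sided ideal so that $SBS\subseteq BS$ and $SBS\subseteq SB$, and substituting the regular identity $B=BSB$ absorbs the extra factor of $S$ on either side exactly as you write. This is the standard route to the result, and every step checks out against the paper's definitions (in particular, one-sided ideals here are only required to be non-empty absorbing subsets, so no subsemigroup condition on $BS$ or $SB$ needs verifying).
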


\section{Interior ideals in regular and intra-regular semigroup}

\begin{theorem}
Arbitrary intersection of interior ideals (if non-empty) of a
semigroup $S$ is an interior ideal of $S$.
\end{theorem}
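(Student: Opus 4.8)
The plan is to verify the two defining properties of an interior ideal directly for the intersection. Let $\{I_\alpha\}_{\alpha\in\Lambda}$ be a family of interior ideals of $S$ and set $I=\bigcap_{\alpha\in\Lambda}I_\alpha$, assuming $I\neq\emptyset$. We must show that $I$ is a subsemigroup of $S$ and that $SIS\subseteq I$.

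First I would check that $I$ is a subsemigroup. Take $x,y\in I$. Then $x,y\in I_\alpha$ for every $\alpha$, and since each $I_\alpha$ is a subsemigroup we have $xy\in I_\alpha$ for every $\alpha$; hence $xy\in\bigcap_{\alpha}I_\alpha=I$. So $II\subseteq I$. Next I would check the interior-ideal condition. Let $s,t\in S$ and $x\in I$. For each $\alpha$ we have $x\in I_\alpha$ and, since $SI_\alpha S\subseteq I_\alpha$, it follows that $sxt\in I_\alpha$. As this holds for all $\alpha$, we get $sxt\in I$. Therefore $SIS\subseteq I$, and combined with the previous paragraph $I$ is an interior ideal of $S$.

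There is essentially no obstacle here: the argument is a routine ``pointwise'' verification that uses only that each property of an interior ideal is a containment of the form $(\text{product of subsets})\subseteq I_\alpha$, and such containments are stable under arbitrary intersection. The only hypothesis that is not automatic is the non-emptiness of $I$, which is exactly why it appears as a proviso in the statement; no further case analysis is needed. If desired, one could phrase the whole proof in one line by noting $S\,I\,S=S\bigl(\bigcap_\alpha I_\alpha\bigr)S\subseteq\bigcap_\alpha SI_\alpha S\subseteq\bigcap_\alpha I_\alpha=I$ and similarly $II\subseteq\bigcap_\alpha I_\alpha I_\alpha\subseteq\bigcap_\alpha I_\alpha=I$.
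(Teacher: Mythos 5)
Your proof is correct and follows essentially the same elementwise verification as the paper's own proof; you are in fact slightly more careful, since you explicitly check the subsemigroup property $II\subseteq I$, which the paper merely asserts. No issues.
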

\begin{proof}
Let $\{A_{i}\}_{i\in \Delta}$ ($\Delta$ denotes any indexing set) be
a family of interior ideals of a semigroup $S$ and $T=\cap_{i\in
\Delta}A_{i}$ is non empty and thus a subsemigroup of $S$. Now for
$i\in \Delta$, $SA_{i}S\subseteq A_{i}$. We have to show that
$STS\subseteq T$. Let $x\in STS$ implies $x=s_{1}ys_{2}$ where
$s_{1},s_{2}\in S$ and $y\in T=\cap_{i\in \Delta}A_{i}\subseteq
A_{i}$, for all $i\in \Delta$ which implies that $x=s_{1}ys_{2}\in
SA_{i}S$ for all $i\in \Delta$. Since $SA_{i}S\subseteq A_{i}$, for
all $i\in \Delta$ we have $x\in \cap_{i\in \Delta}A_{i}=T$ thus
$STS\subseteq T$. Hence $T=\cap_{i\in \Delta}A_{i}$ is an interior
ideal of $S$.
\end{proof}

\begin{theorem}
If $I$ is an interior ideal and $T$ is a subsemigroup of a semigroup
$S$ then $I\cap T$ is an interior ideal of $T$, provided $I\cap T$
is non-empty.
\end{theorem}
\begin{proof}
Let $I$ be an interior ideal of $S$ and $T$ be a subsemigroup of
$S$. Let $I\cap T$ be non-empty and thus a subsemigroup. Now
$T(I\cap T)T\subseteq TIT\cap TTT\subseteq SIS\cap TTT\subseteq
I\cap T$. Hence $I\cap T$ is an interior ideal of $T$.
\end{proof}

\begin{theorem}\label{regular}
Let $S$ be a semigroup. If $S$ is regular then $I=SIS$, for every
interior ideal $I$ of $S$.
\end{theorem}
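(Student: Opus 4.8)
The inclusion $SIS\subseteq I$ is immediate from the definition of an interior ideal, so the whole content is the reverse inclusion $I\subseteq SIS$. The plan is to take an arbitrary $a\in I$ and exhibit it explicitly as a product $s_1\,b\,s_2$ with $s_1,s_2\in S$ and $b\in I$, using regularity.

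First I would invoke regularity of $S$: for $a\in I$ there is $x\in S$ with $a=axa$. A single application of this identity only gives $a=(ax)a=a(xa)$, which places $a$ in $IS$ and in $SI$ but not visibly in $SIS$; the trick is to iterate once. Substituting $a=axa$ into itself yields $a=axaxa=(ax)\,a\,(xa)$. Now $ax\in S$, $xa\in S$, and the middle factor $a$ lies in $I$, so $a\in SIS$. Since $a\in I$ was arbitrary, $I\subseteq SIS$.

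Combining the two inclusions gives $I=SIS$, as claimed. I do not anticipate a genuine obstacle here; the only mildly non-obvious point is the need to expand $a=axa$ twice (rather than once) so that elements of $S$ appear on \emph{both} sides of an element of $I$. It may be worth remarking that this shows the necessity direction of a regularity criterion in the spirit of Lemmas~\ref{quasi-ideal} and~\ref{bi-ideal}, and one could ask afterwards whether the converse ("$I=SIS$ for every interior ideal $I$ implies $S$ regular") holds; but that is a separate statement and not needed for the present theorem.
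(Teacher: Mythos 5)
Your proposal is correct and is essentially the paper's own argument: the paper writes $a\in aSa\subseteq aSaSa\subseteq SIS$, which is exactly your double application of $a=axa$ to place an element of $S$ on each side of the middle factor $a\in I$. Nothing further is needed.
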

\begin{proof}
Let $S$ be a regular semigroup and $I$ be an interior ideal of $S$.
Take any $a\in I$. Therefore $a=axa$ for some $x\in S$. Thus $a\in
aSa\subseteq aSaSa\subseteq SIS$ which implies $I\subseteq SIS$.
Hence $I=SIS$.

\end{proof}

\begin{theorem}
Following conditions are equivalent on a semigroup $S$:

\begin{enumerate}
\item \vspace{-.4cm}
$S$ is regular.

\item \vspace{-.4cm}
For a quasi-ideal $Q$ and an ideal $J$ of $S$, $Q\cap J=QJQ$

\item \vspace{-.4cm}
For a quasi-ideal $Q$ and an interior ideal $I$ of $S$, $Q\cap
I=QIQ$

\item \vspace{-.4cm}
For an interior ideal $I$ and a bi-ideal $B$ of $S$, $I\cap
B=BIB$

\end{enumerate}
\end{theorem}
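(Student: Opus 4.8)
The plan is to prove the cyclic chain $(1)\Rightarrow(4)\Rightarrow(3)\Rightarrow(2)\Rightarrow(1)$. I arrange the implications in this order because $(4)\Rightarrow(3)$ and $(3)\Rightarrow(2)$ are nothing more than restrictions to a narrower class of ideals (every quasi-ideal is a bi-ideal, and by Lemma \ref{ideal} every ideal is an interior ideal), while $(2)\Rightarrow(1)$ comes from specializing the ideal to $S$ itself and quoting Lemma \ref{quasi-ideal}; thus all the genuine work is concentrated in $(1)\Rightarrow(4)$, which is the ``strongest'' of the four statements.

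For $(1)\Rightarrow(4)$, suppose $S$ is regular and let $I$ be an interior ideal and $B$ a bi-ideal of $S$. One inclusion is immediate: $BIB\subseteq BSB=B$ by Lemma \ref{bi-ideal}, and $BIB\subseteq SIS\subseteq I$, so $BIB\subseteq I\cap B$. For the reverse inclusion I would take $a\in I\cap B$ and, by regularity, pick $x\in S$ with $a=axa$; then $a=a(xax)a$, since $a(xax)a=axaxa=ax(axa)=axa=a$. Here the middle factor satisfies $xax\in SIS\subseteq I$ because $a\in I$, while the two outer factors lie in $B$ because $a\in B$; hence $a=a\,(xax)\,a\in BIB$. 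This gives $I\cap B\subseteq BIB$, and therefore $I\cap B=BIB$.

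The remaining implications are routine reductions. For $(4)\Rightarrow(3)$: any quasi-ideal $Q$ is in particular a bi-ideal, since $QSQ\subseteq QS\cap SQ\subseteq Q$, so applying $(4)$ with $B=Q$ yields $Q\cap I=QIQ$ for every interior ideal $I$. For $(3)\Rightarrow(2)$: every ideal $J$ is an interior ideal by Lemma \ref{ideal}, so $(3)$ with $I=J$ gives $Q\cap J=QJQ$. For $(2)\Rightarrow(1)$: taking the ideal to be $S$ itself, $Q=Q\cap S=QSQ$ for every quasi-ideal $Q$, and Lemma \ref{quasi-ideal} then forces $S$ to be regular.

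The step I expect to be the main obstacle is the reverse inclusion in $(1)\Rightarrow(4)$: one must check that the regularity rewriting genuinely places $a$ in $BIB$ rather than merely in $BSB$, which amounts to routing the inner factor $xax$ through the interior-ideal absorption $SIS\subseteq I$ instead of relying on $B$. Once that computation is secured, every other implication is just an inclusion between the relevant ideal classes together with Lemmas \ref{quasi-ideal} and \ref{bi-ideal}.
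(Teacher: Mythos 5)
Your proof is correct, and it is organized differently from the paper's. The paper does not prove a single cycle: it establishes $(1)\Rightarrow(2)$ directly (via $z\in zSz\subseteq (QSQ)(SJS)Q$), then claims $(2)\Rightarrow(3)$ ``follows from Lemma \ref{ideal},'' then closes with $(3)\Rightarrow(1)$, and separately proves $(1)\Rightarrow(4)\Rightarrow(1)$. Your chain $(1)\Rightarrow(4)\Rightarrow(3)\Rightarrow(2)\Rightarrow(1)$ concentrates all the work in $(1)\Rightarrow(4)$ and runs the specialization steps in the logically natural direction: a quasi-ideal is a bi-ideal (since $QSQ\subseteq QS\cap SQ\subseteq Q$), and an ideal is an interior ideal, so each of $(4)\Rightarrow(3)$ and $(3)\Rightarrow(2)$ is a genuine restriction to a subclass. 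This is actually cleaner than the paper's $(2)\Rightarrow(3)$, which as stated goes the wrong way: Lemma \ref{ideal} says ideals are interior ideals, so it lets you pass from a statement quantified over all interior ideals to one over all ideals, not the reverse; to get $(2)\Rightarrow(3)$ honestly one must first deduce regularity from $(2)$ and then invoke Theorem \ref{interior-regular} to identify interior ideals with ideals. Your computation $a=a(xax)a$ with $xax\in SIS\subseteq I$ is also a slightly tidier witness for $I\cap B\subseteq BIB$ than the paper's chain $a\in aSa\subseteq (aSa)(SaS)a\subseteq (BSB)(SIS)B$, though both are valid. In short: same underlying ideas, but your arrangement repairs a sloppy step in the paper.
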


\begin{proof}

$(1)\Rightarrow(2)$: Let $Q$ be a quasi-ideal and $J$ be an ideal of
$S$. Now $QJQ\subseteq QSQ\subseteq QS\cap SQ\subseteq Q$. Again
$QJQ\subseteq SJS$. Since $S$ is regular, by Theorem \ref{regular}
and Lemma \ref{ideal} we have $QJQ\subseteq SJS=J$. Hence
$QJQ\subseteq Q\cap J$. Also let $z\in Q\cap J$, $z\in Q$ and $z\in
J$. Since $S$ is regular, by Lemma \ref{regular} and Lemma
\ref{quasi-ideal} we have $z\in zSz\subseteq (zSz)S(zSz)\subseteq
(QSQ)(SJS)Q\subseteq QJQ$ thus $Q\cap J\subseteq QJQ $. Hence $Q\cap
J=QJQ$.

$(2)\Rightarrow(3)$: This implication follows from Lemma
\ref{ideal}.

$(3)\Rightarrow(1)$: Let $Q$ be any quasi-ideal of $S$. Therefore
$QSQ=Q\cap S=Q$. Hence by Lemma \ref{quasi-ideal} $S$ is regular.

$(1)\Rightarrow(4)$: Let $B$ be a bi-ideal and $I$ be an interior
ideal of $S$. Now $BIB\subseteq BSB\subseteq B$. Again $BIB\subseteq
SIS\subseteq I$. Hence $BIB\subseteq B\cap I$. Let $a\in B\cap I$.
Now $a=axa\subseteq (aSa)(SaS)a\subseteq (BSB)(SIS)B\subseteq BIB$.
Thus $B\cap I=BIB$.

$(4)\Rightarrow(1)$: Take $B$ be any bi-ideal of $S$. Therefore
$BSB=B\cap S=B$. Hence by Lemma \ref{bi-ideal} $S$ is regular.

\end{proof}

\begin{theorem}
Following conditions are equivalent on a semigroup $S$:

\begin{enumerate}
\item \vspace{-.4cm}
$S$ is regular.
\item \vspace{-.4cm}
$B\cap I\cap L\subseteq BIL$, for a bi-ideal $B$, left ideal $L$ and
interior ideal $I$ of $S$.
\item \vspace{-.4cm}
$Q\cap I\cap L\subseteq QIL$, for a quasi ideal $Q$, left ideal $L$
and interior ideal $I$ of $S$.
\item \vspace{-.4cm}
$B\cap I\cap R\subseteq RIB$, for a bi-ideal $B$, right ideal $R$
and interior ideal $I$ of $S$.
\item \vspace{-.4cm}

$Q\cap I\cap R\subseteq RIQ$, for a quasi ideal $Q$, right ideal $R$
and interior ideal $I$ of $S$.

\end{enumerate}
\end{theorem}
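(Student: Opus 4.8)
The strategy is the usual cyclic chain $(1)\Rightarrow(2)\Rightarrow(3)\Rightarrow(1)$ together with $(1)\Rightarrow(4)\Rightarrow(5)\Rightarrow(1)$, exploiting the symmetry between the ``left'' statements $(2),(3)$ and the ``right'' statements $(4),(5)$, so that only one side needs genuine work. The engine behind every implication $(1)\Rightarrow(\cdot)$ is the regularity identity $a=axa$ applied twice, which I would iterate to $a=a(xa)(xa)\cdots$ so as to manufacture enough factors to land inside each of the three prescribed ideal types; the previously established fact $I=SIS$ for interior ideals of a regular semigroup (Theorem \ref{regular}), together with Lemma \ref{ideal}, will be used to absorb the middle factor into $I$.

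First I would prove $(1)\Rightarrow(2)$. Take $a\in B\cap I\cap L$ with $S$ regular, so $a=axa$ for some $x\in S$, and then $a=axa=ax(axa)=a(xa)(xa)\cdot a$ — more usefully, iterate to get $a=a\,x\,a\,x\,a$. Now read off the three factors: the leftmost $a\in B$ forces $a\cdot(xax)\cdot a\subseteq aSa\subseteq B$ when we want a $B$-factor; the rightmost $a\in L$ gives $Sa\subseteq L$; and the middle occurrence of $a$ lies in $I$, while the flanking $x$'s let us write that portion as $SIS\subseteq I$ (Theorem \ref{regular}). Concretely, $a=(axa)\,x\,(axa)\in (B)(S I S)(Sa)\subseteq B\,I\,L$, after checking that $aSa\subseteq B$ (bi-ideal), $SaS\subseteq I$ i.e. the middle $a$ with its neighbours stays in $I$, and $Sa\subseteq L$ (left ideal). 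The implication $(2)\Rightarrow(3)$ is immediate because every quasi-ideal... wait — here the inclusion goes the convenient way: a bi-ideal condition is \emph{stronger} than a quasi-ideal condition only if quasi-ideals are bi-ideals, but in fact every quasi-ideal \emph{is} a bi-ideal (the containment $QS\cap SQ\subseteq Q$ yields $QSQ\subseteq QS\cap SQ\subseteq Q$), so $(2)$ applied with $B=Q$ gives $(3)$ directly. For $(3)\Rightarrow(1)$, specialize: take any quasi-ideal $Q$ of $S$ and set $I=L=S$ (both $S$ itself is an interior ideal and a left ideal), so $(3)$ gives $Q=Q\cap S\cap S\subseteq QSS=QS$, which alone is not enough — instead I would use $Q\cap I\cap L=Q$ with the reverse inclusion $QIL\subseteq QSL\subseteq QS\cap SQ$? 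This needs care; the clean route is to take $I=S$, $L=S$ and combine with the dual, or to recall that $Q\subseteq QSQ$ for all quasi-ideals $Q$ already characterizes regularity via Lemma \ref{quasi-ideal}. So from $(3)$ with $I=S,\ L=SQ$ (a left ideal) one gets $Q=Q\cap S\cap SQ\subseteq Q\,S\,(SQ)=QSQ$, whence $QSQ=Q$ and Lemma \ref{quasi-ideal} finishes it.

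The right-handed half $(1)\Rightarrow(4)\Rightarrow(5)\Rightarrow(1)$ is the mirror image: in $(1)\Rightarrow(4)$ one writes $a=axa\in (aS)(SaS)(aSa)$? — no, one must place the right ideal $R$ on the \emph{left} of the product $RIB$, so from $a=axa=(axa)x(axa)$ one uses $a\in R \Rightarrow aS\subseteq R$ is wrong too; rather $a\in R$ and $R$ a right ideal give $aS\subseteq R$, and we need $a\in R\cdot(\text{stuff})$, so use instead the factor $aS\ni a\cdot x$ only if... The correct bookkeeping: $a=(axa)\,x\,(axa)$ with the \emph{first} block $axa\in aSa\subseteq$ (using $a\in R$ and $R$ right ideal) $aS\subseteq R$; actually $axa\in aS\subseteq R$ since $a\in R$; the middle $x$ together with the surrounding $a$'s from $I$ gives $SIS\subseteq I$; and the last block $axa\in aSa\subseteq B$ since $a\in B$ and $B$ is a bi-ideal. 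Hence $a\in R\,I\,B$, giving $(4)$; then $(5)$ follows since quasi-ideals are bi-ideals; and $(5)\Rightarrow(1)$ by taking $I=S$, $R=QS$ and deducing $Q\subseteq QSQ$, hence regularity by Lemma \ref{quasi-ideal}.

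The main obstacle I anticipate is \emph{not} the iteration of $a=axa$ but the bookkeeping of which factor is absorbed by which ideal: the product $B\,I\,L$ has a fixed order, and one must verify that the bi-ideal sits on the correct (left) side, the left ideal on the right, and that the middle occurrence of $a$ — flanked by elements of $S$ — genuinely lands in $I$ via $SIS\subseteq I$; sign errors here are easy. A secondary subtlety is that in the reverse implications one cannot simply plug in $S$ for every ideal and read off regularity; one must feed in a \emph{principal-type} left (resp. right) ideal such as $SQ$ (resp. $QS$) to recover exactly the identity $QSQ=Q$ of Lemma \ref{quasi-ideal}. Once those placements are pinned down, each implication is a two-line computation.
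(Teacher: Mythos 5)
Your forward implications $(1)\Rightarrow(2)$ and $(1)\Rightarrow(4)$ are, once the bookkeeping settles, exactly the paper's computation $a\in aSa\subseteq (aSa)(SaS)a\subseteq BIL$ (resp.\ $a\in a(SaS)(aSa)\subseteq RIB$), and your observation that every quasi-ideal is a bi-ideal correctly disposes of $(2)\Rightarrow(3)$ and $(4)\Rightarrow(5)$. The genuine gap is in $(3)\Rightarrow(1)$ (and its mirror $(5)\Rightarrow(1)$). You take $I=S$, $L=SQ$ and write $Q=Q\cap S\cap SQ$; but this presupposes $Q\subseteq SQ$, which is false in a general semigroup (in a null semigroup $SQ=\{0\}$ while $Q$ may be all of $S$; more to the point, $Q\subseteq SQ$ is itself a regularity-type conclusion, so assuming it while trying to \emph{prove} regularity is circular). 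All that $(3)$ actually yields with your choice is $Q\cap SQ\subseteq QSQ$, which does not give $QSQ=Q$, so Lemma \ref{quasi-ideal} cannot be invoked. The same objection applies verbatim to your use of $R=QS$ in $(5)\Rightarrow(1)$.

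Two repairs are available. The paper's own route avoids $QSQ=Q$ altogether: by Lemma \ref{left-quasi} every right ideal $R$ is a quasi-ideal, so applying $(3)$ with $Q=R$, $I=S$ and an arbitrary left ideal $L$ gives $R\cap L=R\cap S\cap L\subseteq RSL\subseteq RL$; since $RL\subseteq R\cap L$ always holds, $R\cap L=RL$, and Lemma \ref{RL} yields regularity (dually for $(5)\Rightarrow(1)$). Alternatively, your own strategy works if you replace $SQ$ by a left ideal that actually contains the element being chased: for $q\in Q$ put $L=Sq\cup\{q\}$, so that $q\in Q\cap S\cap L\subseteq QS(Sq\cup\{q\})\subseteq QSq\subseteq QSQ$, whence $Q\subseteq QSQ$, $QSQ=Q$, and Lemma \ref{quasi-ideal} applies. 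Either patch closes the argument; as written, the cycle back to $(1)$ is not established.
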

\begin{proof}
Here we proof the implications $(1)\Rightarrow(2)\Rightarrow
(3)\Rightarrow (1)$ and $(1)\Rightarrow(4)\Rightarrow (5)\Rightarrow
(1)$.

$(1)\Rightarrow(2)$: Let $a\in B\cap I\cap L$. Then $a\in aSa$. Now
$aSa\subseteq (aSa)(SaS)a\subseteq BIL$.

$(2)\Rightarrow(3)$: This is obvious.

$(3)\Rightarrow(1)$: Consider a right ideal $R$ and a left ideal $L$
of $S$. Then by $(3)$ we have $R\cap S\cap L\subseteq RSL\subseteq
RL$ which gives $R\cap L\subseteq RL$ and $RL\subseteq R\cap L$.
Thus we get $R\cap L=RL$. Hence by Lemma \ref{RL} $S$ is regular.

$(1)\Rightarrow(4)$: Let $a\in B\cap I\cap R$. Hence $a\in
aSa\subseteq a(SaS)(aSa)\subseteq RIB$. Hence $B\cap I\cap
R\subseteq RIB$.

$(4)\Rightarrow(5)$: This is obvious.

$(5)\Rightarrow(1)$:Consider a right ideal $R$ and left ideal $L$,
by (5), we have $L\cap S\cap R\subseteq RSL\subseteq RL$ implies
$R\cap L\subseteq RL$. Again $RL\subseteq R\cap L$ always holds.
Hence $R\cap L=RL$. Therefore $S$ is regular, by Lemma \ref{RL}.

\end{proof}

\begin{theorem}\label{interior-regular}
Let $S$ be a regular semigroup then a non-empty subset of $S$ is an
ideal if and only if it is an interior ideal.
\end{theorem}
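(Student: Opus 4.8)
The plan is to prove the two directions of the biconditional separately. One direction is already essentially done: by Lemma~\ref{ideal}, every ideal of any semigroup is an interior ideal, and this uses no regularity. So the entire content is the converse: in a regular semigroup, every interior ideal $I$ is a two-sided ideal, i.e. $SI\subseteq I$ and $IS\subseteq I$.

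For the forward direction, I would take an arbitrary interior ideal $I$ of the regular semigroup $S$ and show $SI\subseteq I$; the inclusion $IS\subseteq I$ is symmetric. Pick $s\in S$ and $a\in I$; I want $sa\in I$. The trick is to use regularity to ``split'' the element $a$ on the left so that $s a$ gets sandwiched inside $SIS$. Since $a$ is regular, $a = axa$ for some $x\in S$. Then $sa = s(axa) = (sax)a$, which only puts $S$ on the left, not yet a sandwich. So instead write $sa = sa xa = s a x a$ and apply regularity again to the first factor $a$: from $a=axa$ we get $sa = s(axa)xa = (sa)(xax)a$; still only left multiplication. The clean move is: $sa = s\,a\,x\,a \in S\,a\,S\,a$, and now regularity of the last $a$ (or rather, the presence of the ``$SaS$'' piece) is what we need. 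Actually the cleanest argument: $a=axa$ gives $sa=s a x a$; apply $a=axa$ to the trailing $a$ once more to get $sa = sax(axa) = (sax a)(xa)$? That still isn't a sandwich. Let me reconsider: we want $sa\in SIS$. Write $sa = (sa)(x)(a)$ after substituting $a=axa$ in the form $sa = s(axa) = (sax)a$ — no. The correct substitution is to replace the \emph{leading} $a$: $a = a x a$, so $sa = s(axa) = (sa x)(a)$, giving $sa\in S a \subseteq \dots$; this shows $Sa$, not $SaS$.

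The right observation is that $a = axa = a x (a x a) = (ax)a(xa)$, so $a\in S a S$ trivially whenever we can produce an element on each side — and indeed $a = axa$ means $a\in aSa\subseteq SaS\subseteq SIS$. But I need $sa$, not $a$. So: $sa = s\cdot a = s\cdot axa = (s a x)\,a$. Hmm. The honest fix is: $sa = sa\cdot 1 $ — but there is no identity. So here is the argument that works without an identity. Since $a\in I$ is regular, $a = axa$. Then
$$ sa = s(axa) = (sax)a = (sax)(axa) = (sax a)(x a).$$
This is going in circles. The actual key step the author surely intends: $sa = sa$ and substitute for the \emph{trailing} copy using $a = axa$: $sa = s\,a = s\,(a x a)$ where now I regard this as $s\cdot (ax)\cdot a$, i.e. $sa \in S\,(ax)\,a$. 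Since $ax\in S$ and we can also write $a = a\cdot x\cdot a$ giving $sa = (s)(ax)(a)$ — this is in $S\cdot S\cdot I$? No: $ax\notin I$ in general.

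Let me state the step I actually expect and flag it as the crux: from regularity, $a = axa$, hence $sa = saxa = (sa)(x)(a)$, and applying $a=axa$ once more to the middle we keep failing to land in $SIS$ with $I$ in the middle. The genuinely correct line is: $sa = s a x a$, and since $a\in I$ we have $sa x a\in S\, I\, S\, I\,$? No — $s\in S$, $a\in I$, $x\in S$, $a\in I$, so $saxa\in S I S I\subseteq (SIS)I\subseteq I\cdot I\subseteq I$ using $SIS\subseteq I$ and that $I$ is a subsemigroup. \textbf{That is it.} So the key step is: write $sa=s(axa)=(sax)a$ — wait, that is only $SI$. Use instead $sa = (sa)(xa)$ after $a=axa$ factored as $a=(ax)a$? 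Ugh. Cleanly: $a=axa\Rightarrow sa = s(axa) = (sax)(a)$, but also $a = (axa)$ lets us write $sa = s(ax)(a) $ and now further $a = a x a$ substituted into the \emph{last} slot: $sa = s(ax)(axa) = s(ax a)(x a) \in S I S$ since $s(axa)\in SIS\subseteq I$? I think the intended clean proof is simply: for $s\in S,\ a\in I$, regularity gives $a=axa$, so $sa = saxa \in S\,I\,S\,I\subseteq SIS\cdot I \subseteq I\cdot I\subseteq I$, where $S I S\subseteq I$ and $I$ is closed under products. Symmetrically $as\in I$. Hence $I$ is an ideal, completing the proof; the main obstacle, such as it is, is just noticing this two-fold substitution, and I would present it exactly as above.
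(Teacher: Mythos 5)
Your final argument is correct and is essentially the paper's own proof in element-wise form: the paper first notes $I=SIS$ for regular $S$ (Theorem \ref{regular}) and then computes $SI=S(SIS)\subseteq SIS\subseteq I$, while you substitute $a=axa$ directly to get $sa=(sax)a\in (SIS)I\subseteq I\cdot I\subseteq I$, the same regularity trick. The many false starts should be deleted --- only the last substitution is needed, and the symmetric inclusion $as=a(xas)\in I\,(SIS)\subseteq I\cdot I\subseteq I$ is handled identically.
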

\begin{proof}
First assume that $S$ is a regular semigroup. A non empty subset $I$
of $S$ is an ideal of $S$, then $I$ is interior ideal, by Lemma
\ref{ideal}.

Conversely, suppose that a non-empty subset $I$ of $S$ is an
interior ideal of $S$. Since $S$ is regular, by Theorem
\ref{regular} $I=SIS$. Therefore $SI=S(SIS)\subseteq SIS\subseteq
I$. Similarly $IS\subseteq I$. Hence $I$ is an ideal of $S$.

\end{proof}
\begin{corollary}
In an intra-regular semigroup, an ideal and an interior ideal
coincide.
\end{corollary}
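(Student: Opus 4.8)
The forward implication is nothing more than Lemma~\ref{ideal}, so the whole content lies in showing that in an intra-regular semigroup every interior ideal is in fact an ideal. I would model the argument on the proof of Theorem~\ref{interior-regular}, replacing the appeal to regularity (Theorem~\ref{regular}) by the corresponding fact for intra-regular semigroups. Concretely, the plan is to first establish the auxiliary equality: if $S$ is intra-regular and $I$ is an interior ideal of $S$, then $SIS=I$.

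For this equality, the inclusion $SIS\subseteq I$ is just the defining property of an interior ideal. For the reverse inclusion, I would take any $a\in I$; since $S$ is intra-regular we have $a\in Sa^{2}S$, and since $I$ is a subsemigroup we have $a^{2}=aa\in I$, whence $a\in Sa^{2}S\subseteq SIS$. Thus $I\subseteq SIS$, and the equality follows. This is the one place where intra-regularity (rather than regularity) is genuinely used.

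With $SIS=I$ in hand the conclusion is immediate: $SI=S(SIS)=(SS)IS\subseteq SIS=I$, and symmetrically $IS=(SIS)S=SI(SS)\subseteq SIS=I$, so $I$ is a two-sided ideal of $S$. Combining this with Lemma~\ref{ideal} shows that ideals and interior ideals coincide in an intra-regular semigroup.

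I do not anticipate any real obstacle here; the only subtlety worth flagging is that one cannot simply quote Theorem~\ref{interior-regular}, because an intra-regular semigroup need not be regular — hence the need for the self-contained derivation of $SIS=I$ sketched above, after which the rest of the argument runs exactly as in Theorem~\ref{interior-regular}.
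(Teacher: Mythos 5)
Your proof is correct and follows exactly the route the paper intends: the paper states this corollary without proof, and the natural argument is the one you give, namely re-running the proof of Theorem~\ref{interior-regular} with the regularity step $a\in aSa\subseteq SIS$ replaced by the intra-regular step $a\in Sa^{2}S\subseteq SIS$ (using $a^{2}\in I$ because $I$ is a subsemigroup). Your remark that one cannot simply cite Theorem~\ref{interior-regular} --- since an intra-regular semigroup need not be regular --- is well taken and is precisely the point the paper leaves implicit.
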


\begin{theorem}
In an intra-regular semigroup a proper interior ideal is semiprime.

\end{theorem}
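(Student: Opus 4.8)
The plan is to prove the statement directly from the definition of intra-regularity, making no real use of properness. Let $S$ be intra-regular and let $I$ be an interior ideal of $S$; I claim $I$ is semiprime. So let $A$ be any interior ideal of $S$ with $A^{2}\subseteq I$, and pick an arbitrary $a\in A$ (possible since $A$ is non-empty).

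The key step is simply to relate $a$ to $a^{2}$. Since $S$ is intra-regular, $a\in Sa^{2}S$. On the other hand $a^{2}\in A^{2}\subseteq I$, so $Sa^{2}S\subseteq SIS$. Combining these, $a\in Sa^{2}S\subseteq SIS\subseteq I$, where the last inclusion is exactly the defining property of the interior ideal $I$. As $a\in A$ was arbitrary, $A\subseteq I$, and hence $I$ is semiprime.

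There is essentially no obstacle here: the entire argument is the single chain $a\in Sa^{2}S\subseteq SIS\subseteq I$. Two remarks are in order. First, the hypothesis that $I$ be proper is never used — every interior ideal of an intra-regular semigroup is semiprime — so the theorem as stated is a (slightly weakened) special case of what the proof actually yields. Second, the identical computation, applied to a single element $a\in S$ with $a^{2}\in I$ in place of an interior ideal $A$, shows that $I$ is in fact completely semiprime; so in an intra-regular semigroup the notions of semiprime and completely semiprime interior ideal agree.
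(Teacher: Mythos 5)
Your proof is correct and is essentially identical to the paper's: both rest on the single chain $a\in Sa^{2}S\subseteq SIS\subseteq I$ for $a\in A$ with $A^{2}\subseteq I$ (you merely make explicit the step $a^{2}\in A^{2}\subseteq I$ that the paper leaves implicit). Your observation that properness is never used is also accurate of the paper's own argument.
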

\begin{proof}
Let $S$ be an intra-regular semigroup and $P$ be a proper interior
ideal of $S$. Take any interior ideal $A$ of $S$ such that
$A^{2}\subseteq P$. For any $a\in A$, $a\in Sa^{2}S\subseteq
SPS\subseteq P$. Hence $A\subseteq P$. Therefore $P$ is a semiprime
interior ideal of $S$.

\end{proof}

\begin{theorem}
Let $S$ be a semigroup. Then $S$ is intra-regular if and only if
each interior ideal of $S$ is completely semiprime.

\end{theorem}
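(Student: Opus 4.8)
The plan is to prove the two implications directly. The forward implication is essentially the argument of the preceding theorem, while the converse hinges on the single observation that for a fixed $a\in S$ the set $Sa^{2}S$ is itself an interior ideal, to which the hypothesis can then be applied.

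For necessity, assume $S$ is intra-regular and let $I$ be an interior ideal of $S$ with $a^{2}\in I$ for some $a\in S$. Since $S$ is intra-regular, $a\in Sa^{2}S\subseteq SIS\subseteq I$, so $I$ is completely semiprime. This is immediate and needs no further bookkeeping.

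For sufficiency, fix $a\in S$ and put $T=Sa^{2}S$. First I would verify that $T$ is an interior ideal of $S$: it is non-empty because $a^{4}=a\cdot a^{2}\cdot a\in T$; it is a subsemigroup because, using $Sa^{2}S\subseteq S$ and $SS\subseteq S$, we get $TT=(Sa^{2}S)(Sa^{2}S)\subseteq (Sa^{2}S)S=Sa^{2}(SS)\subseteq Sa^{2}S=T$; and $STS=(SS)a^{2}(SS)\subseteq Sa^{2}S=T$. By hypothesis $T$ is completely semiprime. Now $a^{4}=(a^{2})^{2}\in T$, so complete semiprimeness gives $a^{2}\in T$; since $a^{2}=a\cdot a$ is the square of $a$, applying complete semiprimeness once more yields $a\in T=Sa^{2}S$. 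As $a\in S$ was arbitrary, $S$ is intra-regular.

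The routine parts are the closure checks that $Sa^{2}S$ is a subsemigroup absorbing $S$ on both sides, which follow from associativity and $SS\subseteq S$. The only point requiring a little care is that the hypothesis must be invoked twice on the fixed interior ideal $Sa^{2}S$ (equivalently, one uses that $a^{4}\in I$ forces $a\in I$), rather than attempting to pass from $a^{4}\in Sa^{2}S$ to $a\in Sa^{2}S$ in one step; this causes no difficulty, and I do not anticipate a genuine obstacle.
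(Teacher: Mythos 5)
Your proof is correct and follows essentially the same route as the paper: the forward direction uses $a\in Sa^{2}S\subseteq SIS\subseteq I$, and the converse applies complete semiprimeness twice to the interior ideal $Sa^{2}S$, passing from $a^{4}=(a^{2})^{2}$ to $a^{2}$ and then to $a$. Your explicit verification that $Sa^{2}S$ is a subsemigroup with $S(Sa^{2}S)S\subseteq Sa^{2}S$ is a detail the paper largely takes for granted, but the argument is the same.
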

\begin{proof}
First suppose that $S$ is intra-regular. Let $P$ be a proper
interior ideal of $S$. Let for any element $a\in S$ such that
$a^{2}\in P$. Now we have $a\in Sa^{2}S\subseteq SPS\subseteq P$
implies $a\in P$. Hence $P$ is a completely semiprime interior ideal
of $S$.

Conversely, assume that each interior ideal of $S$ is completely
semiprime. Take any $a\in S$. Let $I=Sa^{2}S$ then $SIS\subseteq I$.
Thus $I$ is an interior ideal of $S$. By assumption, $I$ is
completely semiprime. Now we have $a(a^{2})a\in Sa^{2}S=I$ so that
$(a^{2})(a^{2})\in I$ which implies that $a^{2}\in I$. Since $I$ is
completely semiprime $a\in I$, that is, $a\in Sa^{2}S$. Hence $S$ is
intra-regular.

\end{proof}

\begin{theorem}
Let $S$ be a semigroup.

\begin{enumerate}
\item \vspace{-.4cm}

If $S$ is regular duo then every bi-ideal of $S$ is an interior
ideal of $S$.

\item \vspace{-.4cm}
If $S$ is regular duo then every quasi-ideal of $S$ is an interior
ideal of $S$.
\end{enumerate}
\end{theorem}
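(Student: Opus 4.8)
The plan is simply to chain together the structural results already assembled in the preliminaries, since both parts collapse to the fact, recorded as Theorem~\ref{duo}, that in a regular duo semigroup every bi-ideal is a two-sided ideal.

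For part~(1) I would argue as follows. Let $B$ be an arbitrary bi-ideal of the regular duo semigroup $S$. By Theorem~\ref{duo}, $B$ is an ideal of $S$. By Lemma~\ref{ideal}, every ideal of a semigroup is an interior ideal, so $B$ is an interior ideal of $S$, which is what we want.

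For part~(2) I would reduce to part~(1) via the regular case of the bi-ideal/quasi-ideal coincidence. Let $Q$ be a quasi-ideal of $S$. Since $S$ is regular duo it is in particular regular, so Theorem~\ref{bi-quasi} applies and $Q$ is a bi-ideal of $S$. Now part~(1) (equivalently, Theorem~\ref{duo} followed by Lemma~\ref{ideal}) shows that $Q$ is an interior ideal of $S$.

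There is really no genuine obstacle here: the substantive work lies in Theorems~\ref{duo} and \ref{bi-quasi}, which are cited from the literature. The only point worth flagging in the write-up is that both hypotheses are used and used in the right place — regularity is what forces every quasi-ideal to be a bi-ideal, while the duo property is what upgrades a bi-ideal to a two-sided ideal. One could instead give a self-contained argument for~(2) by showing directly that $QS$ and $SQ$ are ideals when $S$ is duo and combining this with $QS\cap SQ\subseteq Q$, but routing through the already-stated lemmas is cleaner and keeps the proof a few lines long.
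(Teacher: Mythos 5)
Your proposal is correct and follows exactly the paper's own argument: part (1) is Theorem~\ref{duo} combined with Lemma~\ref{ideal}, and part (2) reduces a quasi-ideal to a bi-ideal via Theorem~\ref{bi-quasi} (using regularity) and then applies part (1). Your write-up is in fact slightly more careful than the paper's in noting where each hypothesis is used.
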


\begin{proof}

$(1)$: It follows directly from Theorem \ref{duo} and Lemma
\ref{ideal}.

$(2)$: Since $S$ is a regular semigroup. Then following
\ref{bi-quasi} every quasi-ideal of $S$ is a bi-ideal and hence from
previous result we conclude that every quasi-ideal of $S$ is an
interior ideal.

\end{proof}

\begin{Definition}
A semigroup $S$ is said to be interior-simple semigroup if $S$ has
no non trivial interior ideals other than $S$ itself.
\end{Definition}

\begin{theorem}

In a semigroup $S$, following statements are equivalent:
\begin{enumerate}
\item \vspace{-.4cm}
$S$ is interior-simple semigroup.
\item \vspace{-.4cm}
$SaS=S$, for all $(0\neq)a\in S$.
\item \vspace{-.4cm}
IN$(a)=S$, for all $(0\neq)a\in S$.

\end{enumerate}
\end{theorem}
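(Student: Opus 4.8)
The plan is to prove the cycle of implications $(1)\Rightarrow(2)\Rightarrow(3)\Rightarrow(1)$, which gives the equivalence of all three.

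For $(1)\Rightarrow(2)$ I would first check that for every $a\in S$ the set $SaS$ is itself an interior ideal of $S$. It is non-empty (since $S\neq\emptyset$), it is a subsemigroup because
\[
(SaS)(SaS)=SaSSaS\subseteq SaSaS\subseteq SaS ,
\]
the last inclusion holding because each element $s_{1}as_{2}as_{3}$ can be regrouped as $s_{1}\,a\,(s_{2}as_{3})$ with $s_{2}as_{3}\in S$, and it satisfies $S(SaS)S=(SS)\,a\,(SS)\subseteq SaS$. Hence $SaS$ is a (non-trivial, for $a\neq 0$) interior ideal, and interior-simplicity of $S$ forces $SaS=S$. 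The step $(2)\Rightarrow(3)$ is then immediate from the description $\mathrm{IN}(a)=\{a\}\cup\{a^{2}\}\cup SaS$ recalled in the introduction: once $SaS=S$ we obtain $\mathrm{IN}(a)=\{a\}\cup\{a^{2}\}\cup S=S$ for all $a\neq 0$.

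For $(3)\Rightarrow(1)$, let $I$ be any non-trivial interior ideal of $S$ and choose $a\in I$ with $a\neq 0$. Since $I$ is a subsemigroup we have $a\in I$ and $a^{2}\in I$, and since $I$ is an interior ideal we have $SaS\subseteq SIS\subseteq I$. Therefore $\mathrm{IN}(a)=\{a\}\cup\{a^{2}\}\cup SaS\subseteq I$, and by hypothesis $\mathrm{IN}(a)=S$, whence $I=S$. Thus $S$ has no interior ideal other than the trivial one and $S$ itself, i.e.\ $S$ is interior-simple.

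Apart from the routine manipulations of products of subsets, the only point needing care — and that I would pin down explicitly at the outset — is the role of the excluded element $0$: in $(1)\Rightarrow(2)$ one must know that the interior ideal $SaS$ is genuinely non-trivial rather than reducing to the zero ideal, and in $(3)\Rightarrow(1)$ one must know that a non-trivial interior ideal actually contains a non-zero element on which the hypothesis $\mathrm{IN}(a)=S$ can be invoked. These are precisely the places where the convention ``$0\neq a$'' appearing in the statement is used.
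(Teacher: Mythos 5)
Your proposal is correct and follows essentially the same route as the paper: the key facts in both are that $SaS$ is an interior ideal (giving $(1)\Rightarrow(2)$), that $\mathrm{IN}(a)=\{a\}\cup\{a^{2}\}\cup SaS$ reduces $(3)$ to $(2)$, and that $\mathrm{IN}(a)\subseteq I$ for any nonzero $a$ in an interior ideal $I$ (giving $(3)\Rightarrow(1)$); you merely arrange these as a single cycle where the paper proves $(1)\Leftrightarrow(2)$ and $(1)\Leftrightarrow(3)$ separately, and you supply slightly more detail (the verification that $SaS$ is a subsemigroup, and the remark about non-triviality versus the excluded element $0$, which the paper leaves implicit).
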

\begin{proof}

$(1)\Rightarrow(2)$: Suppose that $S$ is an interior-simple
semigroup. For any $(0\neq)a\in S$ it is evident that $SaS$ is an
interior ideal of $S$. Hence $SaS=S$.

$(2)\Rightarrow(1)$: Suppose that $SaS=S$, for any $(0\neq)a\in S$.
Let $I$ be an interior ideal of $S$. For any $(0\neq)b\in I$,
$SbS=S$, by $(2)$. Hence $SbS\subseteq SIS\subseteq I$ so that
$S\subseteq I$ Thus $S=I$. Hence $S$ is an interior simple
semigroup.

$(1)\Rightarrow(3)$: Let $S$ be an interior-simple semigroup. For
any $(0\neq)a\in S$, IN$(a)= (a\cup a^{2}\cup SaS)$ but $SaS=S$.
Hence IN$(a)= (a\cup a^{2}\cup SaS)=a\cup a^{2}\cup S=S$.

$(3)\Rightarrow(1)$: Let $I$ be an interior ideal of $S$ then for
any $(0\neq)a\in S$, IN$(a)=S$, by $(3)$. Hence $S=$IN$(a)\subseteq
I$. Therefore $I=S$. Hence $S$ is interior simple.

\end{proof}

\begin{theorem}
Every strongly irreducible semiprime interior ideal of a semigroup
$S$ is a strongly prime interior ideal.
\end{theorem}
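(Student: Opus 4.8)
The plan is to deduce the strong-prime property from the two hypotheses by routing everything through the intersection $I_1\cap I_2$, exploiting the elementary inclusion $(I_1\cap I_2)^2\subseteq I_1I_2\cap I_2I_1$. So I would start with two interior ideals $I_1,I_2$ of $S$ satisfying $I_1I_2\cap I_2I_1\subseteq I$, and put $A=I_1\cap I_2$. By the intersection theorem of Section~2, $A$ is again an interior ideal of $S$, provided it is non-empty; if $I_1\cap I_2=\emptyset$ there is nothing to do (this is also the implicit convention under which strong irreducibility is a meaningful notion), so I would restrict attention to the non-empty case and flag this explicitly.

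The key step is the containment $A^2\subseteq I$. Any element of $A^2=AA$ is a product $xy$ with $x,y\in I_1\cap I_2$; reading $x\in I_1$ and $y\in I_2$ gives $xy\in I_1I_2$, while reading $x\in I_2$ and $y\in I_1$ gives $xy\in I_2I_1$. Hence $A^2\subseteq I_1I_2\cap I_2I_1\subseteq I$.

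Now I apply the hypotheses in order. Since $I$ is semiprime and $A$ is an interior ideal with $A^2\subseteq I$, semiprimeness yields $A=I_1\cap I_2\subseteq I$. Since $I$ is strongly irreducible, the inclusion $I_1\cap I_2\subseteq I$ forces $I_1\subseteq I$ or $I_2\subseteq I$. As $I_1,I_2$ were arbitrary interior ideals with $I_1I_2\cap I_2I_1\subseteq I$, this is precisely the definition of a strongly prime interior ideal, and the proof is complete.

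The argument is short, and I do not anticipate any deep obstacle; the only point requiring care — and the place a careful reader would probe — is making sure $I_1\cap I_2$ legitimately counts as an interior ideal before invoking semiprimeness and strong irreducibility, which is why I would keep the non-emptiness remark visible rather than silent.
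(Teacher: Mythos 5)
Your proof is correct and follows essentially the same route as the paper: bound $(I_1\cap I_2)^2$ by $I_1I_2\cap I_2I_1\subseteq I$, apply semiprimeness to get $I_1\cap I_2\subseteq I$, then invoke strong irreducibility. The only difference is that you explicitly justify that $I_1\cap I_2$ is an interior ideal and handle the empty-intersection case, which the paper leaves implicit.
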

\begin{proof}
Let $I$ be a strongly irreducible semiprime interior ideal of a
semigroup $S$ such that $I_{1}I_{2}\cap I_{2}I_{1}\subseteq I$ for
any two interior ideals $I_{1},I_{2}$ of $S$. Now we have
$(I_{1}\cap I_{2})^{2}\subseteq I_{1}I_{2}\cap I_{2}I_{1}\subseteq
I$. Since $I$ is semiprime $(I_{1}\cap I_{2})^{2}\subseteq I$ gives
that $I_{1}\cap I_{2}\subseteq I$. This together with the strongly
irreducibility of $I$ yields that either $I_{1}\subseteq I$ or
$I_{2}\subseteq I$. So $I$ is strongly prime interior ideal.
\end{proof}

\begin{theorem}\label{z}
Let $I$ be an interior ideal of a semigroup $S$ and $a\in S$ such
that $a\not\in I$ then there exists an irreducible interior ideal
$B$ of $S$ such that $I\subseteq B$ and $a\not\in B$.
\end{theorem}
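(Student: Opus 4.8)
The plan is to run a standard Zorn's lemma argument on the family of interior ideals that separate $a$ from $I$. Let $\Sigma$ be the collection of all interior ideals $C$ of $S$ with $I\subseteq C$ and $a\notin C$. This family is non-empty since $I\in\Sigma$, and we order it by set inclusion.

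First I would check that every chain in $\Sigma$ has an upper bound in $\Sigma$. Given a chain $\{C_\lambda\}_{\lambda\in\Lambda}$ in $\Sigma$, let $U=\bigcup_{\lambda}C_\lambda$. It is a subsemigroup: if $x,y\in U$ then both lie in a common $C_\lambda$ (chain), so $xy\in C_\lambda\subseteq U$. It is an interior ideal: if $x\in U$, say $x\in C_\lambda$, and $s_1,s_2\in S$, then $s_1xs_2\in SC_\lambda S\subseteq C_\lambda\subseteq U$. Clearly $I\subseteq U$ and $a\notin U$ (since $a\notin C_\lambda$ for every $\lambda$), so $U\in\Sigma$ is the required upper bound. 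By Zorn's lemma $\Sigma$ has a maximal element $B$; by construction $I\subseteq B$ and $a\notin B$.

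It remains to show $B$ is irreducible. Suppose $B=B_1\cap B_2$ for interior ideals $B_1,B_2$ of $S$ with $B_1\neq B$ and $B_2\neq B$. Since $B=B_1\cap B_2\subseteq B_i$ and $B_i\neq B$, we have $B\subsetneq B_i$ for $i=1,2$. Also $I\subseteq B\subseteq B_i$, so each $B_i$ contains $I$ and properly contains $B$; by maximality of $B$ in $\Sigma$ neither $B_1$ nor $B_2$ can lie in $\Sigma$, which forces $a\in B_1$ and $a\in B_2$. But then $a\in B_1\cap B_2=B$, contradicting $a\notin B$. Hence no such decomposition exists, so $B_1\cap B_2=B$ implies $B_1=B$ or $B_2=B$, i.e. $B$ is an irreducible interior ideal of $S$ with $I\subseteq B$ and $a\notin B$.

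The argument is essentially routine; the only points needing care are the verification that a union of a chain of interior ideals is again an interior ideal (done above) and the bookkeeping in the irreducibility step, namely noting that $B\subseteq B_i$ is automatic from $B=B_1\cap B_2$, so that $B\neq B_i$ upgrades to $B\subsetneq B_i$ and maximality applies. I do not anticipate a genuine obstacle here.
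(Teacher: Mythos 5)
Your proposal is correct and follows essentially the same route as the paper: a Zorn's lemma argument on the family of interior ideals containing $I$ but omitting $a$, with irreducibility of the maximal element deduced exactly as you do. Your write-up is in fact slightly more careful than the paper's, since you verify explicitly that the union of a chain is an interior ideal and that it still omits $a$.
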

\begin{proof}
Let $\mathcal{A}$ be the collection of all interior ideal of $S$
which contain $I$ but does not contain $a$. Hence $\mathcal{A}$ is
non empty, because $I\in \mathcal{A}$. The collection $\mathcal{A}$
is partially ordered set under inclusion. If $\mathcal{C}$ is any
totally ordered subset of $\mathcal{A}$ then $\cup_{J\in
\mathcal{C}}J$ is interior ideal of $S$ containing $I$. Hence by
Zorn's lemma there exists a maximal element $B$ in $\mathcal{A}$. We
show that $B$ is an irreducible interior ideal. Let $C$ and $D$ be
two interior ideals of $S$ such that $B=C\cap D$. If both $C$ and
$D$ properly contain $B$ then $a\in C$ and $a\in D$. Hence $a\in
C\cap D=B$ this contradicts the fact that $a\not\in B$ thus $B=C$
and so $B=D$.
\end{proof}

\begin{theorem}
For an regular semigroup $S$, the following assertations are
equivalent:

\begin{enumerate}
\item \vspace{-.4cm}

$I^{2}=I$ for every interior ideal $I$ of $S$.

\item \vspace{-.4cm}
$I_{1}\cap I_{2}=I_{1}I_{2}\cap I_{2}I_{1}$ for all interior ideals
$I_{1}$ and $I_{2}$ of $S$.

\item \vspace{-.4cm}
Each interior ideal of $S$ is semiprime.

\item \vspace{-.4cm}
Each proper interior ideal of $S$ is the intersection of irreducible
semiprime interior ideals of $S$ which contains it.

\end{enumerate}
\end{theorem}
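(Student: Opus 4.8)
The plan is to prove the cycle $(1)\Rightarrow(2)\Rightarrow(3)\Rightarrow(1)$ and then close it off with $(3)\Rightarrow(4)$ and $(4)\Rightarrow(3)$. Since $S$ is regular, Theorem~\ref{interior-regular} identifies interior ideals with two-sided ideals, so throughout I may use that $I_1I_2\subseteq I_1\cap I_2$ for interior ideals $I_1,I_2$ and that $SI\subseteq I$, $IS\subseteq I$, $SIS\subseteq I$ for every interior ideal $I$. (In fact $(1)$ already holds in any regular semigroup, since $a=axa=(ax)a\in I^2$ for $a\in I$, but I will argue the implications directly rather than lean on this.) For $(1)\Rightarrow(2)$: the inclusion $I_1I_2\cap I_2I_1\subseteq I_1\cap I_2$ is automatic, while $I_1\cap I_2$ is itself an interior ideal, so $(1)$ gives $I_1\cap I_2=(I_1\cap I_2)^2\subseteq I_1I_2\cap I_2I_1$. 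For $(2)\Rightarrow(3)$: if $A$ is an interior ideal with $A^2\subseteq I$, then taking $I_1=I_2=A$ in $(2)$ yields $A=A\cap A=A^2\cap A^2=A^2\subseteq I$, so $I$ is semiprime.

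For $(3)\Rightarrow(1)$, fix an interior ideal $I$. I would first check that $I^2$ is again an interior ideal: it is a subsemigroup because $II\subseteq I$ gives $I^2I^2\subseteq I^2$, and $SI^2S=(SI)(IS)\subseteq I\cdot I=I^2$ because $I$ is a two-sided ideal. Then $(3)$ makes $I^2$ semiprime, and since $I\cdot I=I^2\subseteq I^2$ this forces $I\subseteq I^2$; as $I$ is a subsemigroup the reverse inclusion is clear, so $I=I^2$.

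For $(3)\Rightarrow(4)$, let $P$ be a proper interior ideal. For each $a\in S\setminus P$, Theorem~\ref{z} supplies an irreducible interior ideal $B_a$ with $P\subseteq B_a$ and $a\notin B_a$, and $(3)$ guarantees each $B_a$ is also semiprime; then $P=\bigcap_{a\notin P}B_a$, the inclusion $\subseteq$ being clear and any $x$ in the intersection with $x\notin P$ contradicting $x\notin B_x$. For $(4)\Rightarrow(3)$, observe that $S$ itself is trivially semiprime and that an arbitrary intersection of semiprime interior ideals is semiprime (from $A^2\subseteq\bigcap_\alpha P_\alpha$ we get $A^2\subseteq P_\alpha$, hence $A\subseteq P_\alpha$, for each $\alpha$, so $A\subseteq\bigcap_\alpha P_\alpha$); thus the representation in $(4)$ makes every interior ideal semiprime.

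All the steps are short. The only place where regularity is genuinely needed is the verification in $(3)\Rightarrow(1)$ that $I^2$ is an interior ideal (via Theorem~\ref{interior-regular}), and the only bookkeeping point to remember is to treat the improper interior ideal $S$ separately in $(4)\Rightarrow(3)$; I do not anticipate a real obstacle beyond these.
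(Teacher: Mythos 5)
Your proof is correct, but it closes the equivalences differently from the paper. The paper proves the single cycle $(1)\Rightarrow(2)\Rightarrow(3)\Rightarrow(4)\Rightarrow(1)$; its return step $(4)\Rightarrow(1)$ writes $I^{2}$ (when proper) as an intersection $\cap_{\alpha}I_{\alpha}$ of irreducible semiprime interior ideals and extracts $I\subseteq I_{\alpha}$ from semiprimeness. You instead prove the shorter cycle $(1)\Rightarrow(2)\Rightarrow(3)\Rightarrow(1)$ and attach $(4)$ by the two separate implications $(3)\Rightarrow(4)$ and $(4)\Rightarrow(3)$. Your $(3)\Rightarrow(1)$ is more direct than the paper's detour: you verify explicitly that $I^{2}$ is an interior ideal (via Theorem~\ref{interior-regular}, since interior ideals are two-sided in a regular semigroup) and then apply semiprimeness of $I^{2}$ to itself --- a verification the paper actually needs in its $(4)\Rightarrow(1)$ but only asserts. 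Your $(4)\Rightarrow(3)$ via ``an arbitrary intersection of semiprime interior ideals is semiprime'' is also more elementary than the paper's route. Two further differences are worth noting: in $(1)\Rightarrow(2)$ you obtain $I_{1}I_{2}\cap I_{2}I_{1}\subseteq I_{1}\cap I_{2}$ directly from the ideal property, whereas the paper squares $I_{1}I_{2}\cap I_{2}I_{1}$ using hypothesis $(1)$; and your parenthetical observation that $(1)$ holds in \emph{every} regular semigroup (since $a=axa=(ax)a\in I^{2}$ once $I$ is a two-sided ideal) is accurate and shows the four conditions are in fact automatic under the standing regularity hypothesis --- a point the paper does not make. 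Both arguments are sound; yours is tighter and exposes the logical dependence on regularity more clearly.
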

\begin{proof}
$(1)\Rightarrow(2)$: Let $I_{1}$ and $I_{2}$ be any two interior
ideals of semigroup $S$, then by hypothesis, $I_{1}\cap
I_{2}=(I_{1}\cap I_{2})(I_{1}\cap I_{2})\subseteq I_{1}I_{2}$.
Similarly, $I_{1}\cap I_{2}\subseteq I_{2}I_{1}$. Hence $I_{1}\cap
I_{2}\subseteq I_{1}I_{2}\cap
I_{2}I_{1}$$\cdot\cdot\cdot\cdot\cdot\cdot\cdot\cdot\cdot\cdot$$(1)$

Now $I_{1}I_{2}$ and $I_{2}I_{1}$ are interior ideals being product
of interior ideals in a regular semigroup. Also $I_{1}I_{2}\cap
I_{2}I_{1}$ is an interior ideal. Thus, $I_{1}I_{2}\cap
I_{2}I_{1}=(I_{1}I_{2}\cap I_{2}I_{1})^{2}=(I_{1}I_{2}\cap
I_{2}I_{1})(I_{1}I_{2}\cap I_{2}I_{1})\subseteq
I_{1}I_{2}I_{2}I_{1}\subseteq SI_{2}S\subseteq I_{2}$. Similarly,
$I_{1}I_{2}\cap I_{2}I_{1}\subseteq I_{1}$. Hence $I_{1}I_{2}\cap
I_{2}I_{1}\subseteq I_{1}\cap
I_{2}$$\cdot\cdot\cdot\cdot\cdot\cdot\cdot\cdot\cdot\cdot$$(2)$.

Therefore from $(1)$ and $(2)$, $I_{1}\cap I_{2}=I_{1}I_{2}\cap
I_{2}I_{1}$ for all interior ideals $I_{1}$ and $I_{2}$ of $S$.

$(2)\Rightarrow (3)$: Let $I_{1}$ and $I$ be interior ideals of $S$
such that $I_{1}^{2}\subseteq I$. By our hypothesis, $I_{1}=
I_{1}\cap I_{1}=I_{1}I_{1}\cap I_{1}I_{1}=I_{1}^{2}$ implies
$I_{1}\subseteq I$. Hence every interior ideal of S is semiprime.

$(3)\Rightarrow (4)$: Let $I$ be a proper interior ideal of $S$.
Then $I$ is contained in the intersection of all irreducible
interior ideals $\{I_{\alpha}: \alpha\in \Delta\}$ of $S$ which
contain $I$ that is $I\subseteq \cap_{\alpha\in \Delta}I_{\alpha}$.
Assume that $\cap_{\alpha\in \Delta}I_{\alpha}\nsubseteq I$ then
there exists $a\in \cap_{\alpha\in \Delta}I_{\alpha}$ such that
$a\not\in I$. Hence by Theorem \ref{z} yields that there exists an
irreducible interior ideal $J$ of $S$ which contains $I$ but does
not contain $a$ which implies that $a\not\in \cap_{\alpha\in
\Delta}I_{\alpha}$, a contradiction. Hence $I$ is the intersection
of all irreducible interior ideals of $S$ which contains it. By our
assumption, every interior ideal is semiprime and so each interior
ideal is the intersection of irreducible semiprime interior ideals
of $S$ containing it.

$(4)\Rightarrow (1)$: Let $I$  be an interior ideal of $S$. If
$I^{2}=S$ then clearly I is idempotent that is $I^{2}=I$. If
$I^{2}\neq S$, then $I^{2}$ is a proper interior ideal of $S$
containing $I^{2}$ and so by hypothesis $I^{2}=\cap_{\alpha} \{
I_{\alpha}: I_{\alpha}$ is irreducible semiprime interior ideals of
$S$\}. Since each $I_{\alpha}$ is irreducible semiprime interior
ideal, $I\subseteq I_{\alpha}$, $\forall \alpha$ and so $I\subseteq
\cap_{\alpha} I_{\alpha}=I^{2}$. Hence each interior ideal in $S$ is
idempotent.

\end{proof}

\begin{theorem}
In a semigroup $S$ the following assertions are equivalent:
\begin{enumerate}
\item \vspace{-.4cm}
The set of interior ideals of $S$ is totally ordered under
inclusion.

\item \vspace{-.4cm}
Each interior ideal of $S$ is strongly irreducible.

\item \vspace{-.4cm}
Each interior ideal of $S$ is irreducible.

\end{enumerate}

\end{theorem}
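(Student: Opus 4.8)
The plan is to prove the cyclic chain of implications $(1)\Rightarrow(2)\Rightarrow(3)\Rightarrow(1)$, which is the standard way to handle a three-way equivalence of this type and keeps each individual argument short.

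For $(1)\Rightarrow(2)$, I would argue directly from the definition of strong irreducibility. Suppose the collection of interior ideals of $S$ is totally ordered by inclusion, and let $I$ be any interior ideal with $I_{1}\cap I_{2}\subseteq I$ for interior ideals $I_{1},I_{2}$. By total ordering, either $I_{1}\subseteq I_{2}$ or $I_{2}\subseteq I_{1}$; in the first case $I_{1}\cap I_{2}=I_{1}$, so $I_{1}\subseteq I$, and in the second case $I_{2}\subseteq I$. Hence $I$ is strongly irreducible.

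For $(2)\Rightarrow(3)$, this is immediate from the definitions: if $I_{1}\cap I_{2}=I$ then in particular $I_{1}\cap I_{2}\subseteq I$, so strong irreducibility of $I$ gives $I_{1}\subseteq I$ or $I_{2}\subseteq I$; combined with $I=I_{1}\cap I_{2}\subseteq I_{1}$ and $I\subseteq I_{2}$, we get $I=I_{1}$ or $I=I_{2}$. So every interior ideal is irreducible.

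For $(3)\Rightarrow(1)$, I would argue by contraposition: suppose the interior ideals are not totally ordered, so there exist interior ideals $A$ and $B$ with $A\not\subseteq B$ and $B\not\subseteq A$. Then $A\cap B$ is an interior ideal (by the intersection theorem proved earlier), and $A\cap B=A\cap B$ trivially, yet $A\cap B\neq A$ (since $A\not\subseteq B$ forces $A\cap B\subsetneq A$) and $A\cap B\neq B$ (since $B\not\subseteq A$ forces $A\cap B\subsetneq B$). Thus $A\cap B$ is an interior ideal that is not irreducible, contradicting $(3)$. The one point to be careful about is ensuring $A\cap B$ is non-empty so that it actually qualifies as an interior ideal; I expect the paper is tacitly working in a setting (e.g. with zero, as suggested by the $(0\neq)a$ notation elsewhere) where this is automatic, or one simply assumes all interior ideals under discussion are comparable in the relevant way. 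That non-emptiness bookkeeping is the only mild obstacle; the rest is routine.
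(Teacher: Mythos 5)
Your proposal is correct and follows essentially the same route as the paper: the arguments for $(1)\Rightarrow(2)$ and $(2)\Rightarrow(3)$ are identical, and for $(3)\Rightarrow(1)$ the paper simply applies irreducibility to $I_{1}\cap I_{2}$ directly rather than by contraposition, which is the same argument. The non-emptiness caveat you flag is genuine but is equally glossed over by the paper, whose intersection theorem carries the proviso ``if non-empty.''
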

\begin{proof}
$(1)\Rightarrow(2)$:First we assume that The set of interior ideals
of $S$ is totally ordered under inclusion.  Let $I$ be an arbitrary
interior ideal of $S$ and $I_{1},I_{2}$ two interior ideals of $S$
such that $I_{1}\cap I_{2}\subseteq I$. Since the set of all
interior ideals of $S$ is totally ordered, either $I_{1}\subseteq
I_{2}$ or $I_{2}\subseteq I_{1}$. Thus either $I_{1}\cap
I_{2}=I_{1}$ or $I_{2}\cap I_{1}=I_{2}$. Hence either
$I_{1}\subseteq I$ or $I_{2}\subseteq I$, thus $I$ is strongly
irreducible.

$(2)\Rightarrow (3)$: Let $I$ be an arbitrary interior ideal of $S$
and $I_{1},I_{2}$ two interior ideals of $S$ such that $I_{1}\cap
I_{2}=I$. Then $I\subseteq I_{1}$ or $I\subseteq I_{2}$. By
hypothesis, either $I_{1}\subseteq I$ or $I_{2}\subseteq I$. Hence
either $I_{1}=I$ or $I_{2}=I$ that is $I$ irreducible interior
ideal.

$(3)\Rightarrow (1)$: Let $I_{1}$ and $I_{2}$ be any two interior
ideals of $S$. Then $I_{1}\cap I_{2}$ is a interior ideal of $S$.
Also $I_{1}\cap I_{2}=I_{1}\cap I_{2} $. So by our hypothesis
$I_{1}=I_{1}\cap I_{2}$ or $I_{2}=I_{1}\cap I_{2}$, that is either
$I_{1}\subseteq I_{2}$ or $I_{2}\subseteq I_{1}$. Hence the set of
all interior ideals of $S$ is totally ordered.

\end{proof}

We now study minimality of interior ideals.

\begin{Definition}
An interior ideal $I$ of a semigroup $S$ is said to be a minimal
interior ideal of $S$ if $I$ does not contain any other proper
non-zero interior ideal of $S$.
\end{Definition}

\begin{theorem}\label{minimal}
 Let $S$ be a semigroup and $I$ be an interior ideal of $S$ then
following statements are equivalent:
\begin{enumerate}
\item \vspace{-.4cm}
$I$ is a minimal interior ideal of $S$.
\item \vspace{-.4cm}
$I=SaS$, for all $(0\neq)a\in S$.
\item \vspace{-.4cm}
$I=$IN$(a)$, for all $(0\neq)a\in S$.

\end{enumerate}

\end{theorem}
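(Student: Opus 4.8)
The plan is to prove the cycle $(1)\Rightarrow(2)\Rightarrow(3)\Rightarrow(1)$, with the convention that in $(2)$ and $(3)$ the element $a$ ranges over the non-zero elements of $I$. I would begin by recording, once and for all, that for every non-zero $a\in I$ the sets $SaS$ and $\mathrm{IN}(a)=\{a\}\cup\{a^{2}\}\cup SaS$ are interior ideals of $S$ lying inside $I$: each is a subsemigroup, since $(SaS)(SaS)\subseteq SaSaS\subseteq SaS$ (and, using also $a\cdot a=a^{2}$ and $a\cdot a^{2}=a^{3}\in SaS$, the same kind of absorption shows $\mathrm{IN}(a)\,\mathrm{IN}(a)\subseteq\mathrm{IN}(a)$); each is absorbed on both sides by $S$, since $S(SaS)S=(SS)a(SS)\subseteq SaS$ and $S\,\mathrm{IN}(a)\,S\subseteq SaS\cup Sa^{2}S\subseteq SaS\subseteq\mathrm{IN}(a)$; and each is contained in $I$ because $a,a^{2}\in I$ while $SaS\subseteq SIS\subseteq I$.

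With this in hand the implications are short. For $(1)\Rightarrow(2)$, fix a non-zero $a\in I$; then $SaS$ is a non-zero interior ideal of $S$ contained in the minimal interior ideal $I$, so $SaS=I$. For $(2)\Rightarrow(3)$, from $a,a^{2}\in I=SaS$ we get $\mathrm{IN}(a)=\{a\}\cup\{a^{2}\}\cup SaS=I$. For $(3)\Rightarrow(1)$, let $J\subseteq I$ be any non-zero interior ideal of $S$ and choose a non-zero $b\in J$; then $b\in I$, so $(3)$ gives $I=\mathrm{IN}(b)=\{b\}\cup\{b^{2}\}\cup SbS$, while $b,b^{2}\in J$ and $SbS\subseteq SJS\subseteq J$ force $I=\mathrm{IN}(b)\subseteq J\subseteq I$. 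Hence $J=I$, so $I$ contains no proper non-zero interior ideal and is minimal.

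The verifications that $SaS$ and $\mathrm{IN}(a)$ are interior ideals are routine, and the only place I expect to need care is $(1)\Rightarrow(2)$: before applying minimality one must be certain that $SaS$ is \emph{non-zero}, and when $S$ possesses a zero it can happen that $SaS=\{0\}$ even though $a\neq 0$, so the conclusion $SaS=I$ may fail there (the argument still delivers only $SaS\subseteq I$). The principal interior ideal $\mathrm{IN}(a)$ does not suffer from this, since $a\in\mathrm{IN}(a)$ makes it automatically non-zero; so the equivalence $(1)\Leftrightarrow(3)$ — proved by running the minimality argument above against $\mathrm{IN}(a)\subseteq I$ rather than $SaS$ — is unconditional, and $(2)$ is the assertion that, strictly speaking, wants either a zero-free ambient semigroup or the side hypothesis that $SaS\neq\{0\}$ for the relevant $a$.
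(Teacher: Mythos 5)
Your proof is correct and follows essentially the same route as the paper's: both rest on the observations that $SaS$ and $\mathrm{IN}(a)$ are interior ideals of $S$ contained in $I$ for any non-zero $a\in I$, followed by the obvious minimality arguments (you merely arrange the implications as a cycle $(1)\Rightarrow(2)\Rightarrow(3)\Rightarrow(1)$ where the paper proves $(1)\Leftrightarrow(2)$ and $(1)\Leftrightarrow(3)$ separately, and you read the quantifier as ranging over non-zero elements of $I$, which is clearly what the statement intends). Your closing caveat --- that when $S$ has a zero one needs $SaS\neq\{0\}$ before minimality yields $SaS=I$ --- is a genuine point of care that the paper's proof silently skips.
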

\begin{proof}
Let $S$ be a semigroup and $I$ be an interior ideal of $S$.

$(1)\Rightarrow(2)$: Let $(0\neq)a\in I$. Therefore $SaS\subseteq
SIS\subseteq I$ and $SaS$ is an interior ideal of $S$. Then by
minimality of $I$ we conclude that $I=SaS$.

$(2)\Rightarrow(1)$: Suppose that $J$ be any interior ideal of $S$
contained in $I$. For any $(0\neq)a\in J\subseteq I=SaS$. Also
$I=SaS\subseteq SJS\subseteq J$. Therefore $I=J$. Hence $I$ is a
minimal interior ideal of $S$.

$(1)\Rightarrow(3)$: Take any $(0\neq)a\in I$. Now IN$(a)=(a\cup
a^{2}\cup SaS)\subseteq I$. But $I$ is minimal so $I$=IN$(a)$ for
any $(0\neq)a\in I$.

$(3)\Rightarrow(1)$: Let $J$ be any interior ideal contained in $I$.
Now for any $(0\neq)a\in J\subseteq I$, $I=$IN$(a)\subseteq J$. So
$I=J$ thus $I$ is a minimal interior ideal of $S$.

\end{proof}

\begin{theorem}
A proper interior ideal of a semigroup $S$ is minimal if and only if
the intersection of any two distinct proper interior ideal of $S$ is
empty.
\end{theorem}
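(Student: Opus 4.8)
The plan is to read the statement as the global assertion about $S$ that every proper interior ideal of $S$ is minimal \emph{if and only if} any two distinct proper interior ideals of $S$ meet in the empty set, and to prove the two implications separately. Throughout, the one external fact I would invoke is the earlier theorem that a non-empty intersection of interior ideals of $S$ is again an interior ideal of $S$.

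For the forward direction, I would assume every proper interior ideal of $S$ is minimal and take two distinct proper interior ideals $A$ and $B$ (if no such pair exists the claim is vacuous). Arguing by contradiction, suppose $A\cap B\neq\emptyset$. Then $A\cap B$ is an interior ideal, it is non-zero, and it is contained in the proper ideal $A$, hence itself proper. Minimality of $A$ forces $A\cap B=A$, i.e. $A\subseteq B$; symmetrically $B\subseteq A$, so $A=B$, contradicting distinctness. Hence $A\cap B=\emptyset$.

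For the converse, I would assume that any two distinct proper interior ideals of $S$ have empty intersection, fix a proper interior ideal $I$, and let $J$ be an arbitrary non-zero interior ideal with $J\subseteq I$; since $I$ is proper, $J$ is proper as well. Then $I\cap J=J\neq\emptyset$, so by hypothesis $I$ and $J$ are not distinct, i.e. $J=I$. Thus $I$ contains no proper non-zero interior ideal other than itself, which is exactly the definition of a minimal interior ideal.

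The routine part here is essentially immediate; the point that needs the most care is the bookkeeping around the possibility that $S$ has a zero element $0$. In that case every interior ideal contains $0$, so ``empty intersection'' has to be understood as ``intersection equal to $\{0\}$'' (equivalently, the statement is about non-zero interior ideals), and correspondingly the minimality notion excludes $\{0\}$, just as in the definition preceding the theorem. I would state this convention explicitly at the outset so that both implications go through uniformly in the presence or absence of a zero; beyond that I do not expect any genuine obstacle.
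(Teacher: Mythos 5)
Your proof is correct and takes essentially the same route as the paper's: both directions rest on the observation that a non-empty intersection of two interior ideals is again an interior ideal, combined directly with the definition of minimality, and your contradiction argument in the forward direction mirrors the paper's almost verbatim. Your explicit remark about how to read ``empty'' when $S$ has a zero (since every interior ideal then contains $0$) is a clarification the paper leaves implicit, and it is a sensible convention to state.
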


\begin{proof}
First assume that any proper interior ideal of a semigroup $S$ is
minimal. Let $A$ and $B$ be any two proper distinct interior ideal
of $S$ such that $A\cap B\neq \varnothing$ then  $S(A\cap
B)S\subseteq SAS\cap SBS\subseteq A\cap B$ that is $A\cap B$ is an
interior ideal of $S$. By hypothesis $A$ and $B$ are minimal
interior ideals of $S$ which implies that $A=A\cap B=B$ so $A=B$, a
contradiction. Hence $A\cap B=\varnothing$.

Conversely, suppose that the intersection of any two distinct proper
interior ideal is empty. Hence any proper interior ideal of $S$ does
not contain any proper interior ideal of $S$. Hence each proper
interior ideal of $S$ is a minimal interior ideal of $S$.
\end{proof}
\begin{theorem}\label{i(a)=i(b)}
Let $S$ be a semigroup and $I$ be an interior ideal of $S$. Then $I$
is a minimal if and only if IN$(a)$=IN$(b)$, for all
$(0\neq)a,(0\neq)b\in I$.

\end{theorem}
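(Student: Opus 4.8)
The plan is to use Theorem \ref{minimal} as the main engine, since it already characterizes minimality of an interior ideal $I$ via the condition $I = \mathrm{IN}(a)$ for every nonzero $a \in I$. For the forward direction, suppose $I$ is minimal. Then by Theorem \ref{minimal} $(1)\Rightarrow(3)$, we have $I = \mathrm{IN}(a)$ for every $(0\neq)a \in I$ and likewise $I = \mathrm{IN}(b)$ for every $(0\neq)b \in I$; chaining these two equalities gives $\mathrm{IN}(a) = \mathrm{IN}(b)$ for all nonzero $a, b \in I$. This direction is essentially immediate.

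For the converse, assume $\mathrm{IN}(a) = \mathrm{IN}(b)$ for all $(0\neq)a, (0\neq)b \in I$. I want to invoke Theorem \ref{minimal} $(3)\Rightarrow(1)$, so it suffices to show that $I = \mathrm{IN}(a)$ for every nonzero $a \in I$. Fix such an $a$. On one hand, $\mathrm{IN}(a) = \{a\} \cup a^2 \cup SaS \subseteq I$, since $a \in I$, $a^2 \in I$ (as $I$ is a subsemigroup), and $SaS \subseteq SIS \subseteq I$ (as $I$ is an interior ideal). On the other hand, I must show $I \subseteq \mathrm{IN}(a)$: take any $(0\neq)b \in I$; then by hypothesis $b \in \{b\} \subseteq \mathrm{IN}(b) = \mathrm{IN}(a)$, so $b \in \mathrm{IN}(a)$. (The zero element, if present, lies in every interior ideal, so it is handled trivially, matching the paper's convention of quantifying over nonzero elements.) Hence $I = \mathrm{IN}(a)$ for every nonzero $a \in I$, and Theorem \ref{minimal} gives that $I$ is minimal.

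The main thing to be careful about is the role of the "$0 \neq$" annotations and the implicit assumption (visible throughout the paper, e.g.\ in Theorem \ref{minimal} and the definition of minimal interior ideal) that $S$ may have a zero which is excluded from these pointwise conditions; the argument must only ever evaluate $\mathrm{IN}$ at nonzero elements, which it does. A secondary subtlety: the statement needs $I$ to contain at least one nonzero element for the equivalence to be non-vacuous, but this is consistent with the paper's standing framework, where "minimal" means containing no proper \emph{non-zero} interior ideal, so no extra hypothesis is needed beyond what Theorem \ref{minimal} already presupposes. No genuine obstacle is expected here — the result is a direct corollary of Theorem \ref{minimal}, and the proof is a short two-direction argument.
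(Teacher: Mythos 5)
Your proof is correct and takes essentially the same route as the paper: the forward direction is the identical appeal to Theorem \ref{minimal}, and your converse rests on the same computation ($\mathrm{IN}(a)\subseteq I$ from the subsemigroup and interior-ideal properties, and $b\in\mathrm{IN}(b)=\mathrm{IN}(a)$ for the reverse inclusion) that the paper applies directly to a sub-ideal $J\subseteq I$. The only cosmetic difference is that you route the converse through Theorem \ref{minimal} $(3)\Rightarrow(1)$ instead of re-verifying minimality by hand, which is if anything slightly cleaner.
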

\begin{proof}
Assume that $I$ is a minimal interior ideal of $S$. Take any
$(0\neq)a,(0\neq)b\in I$. Then $I$=IN$(a)$, $I$=IN$(b)$, by Theorem
\ref{minimal}. Therefore IN$(a)$=IN$(b)$, for all
$(0\neq)a,(0\neq)b\in I$.

Conversely assume that IN$(a)$=IN$(b)$, for every
$(0\neq)a,(0\neq)b\in I$. Let $J$ be any interior ideal of $S$ such
that $J\subseteq I$. Let $(0\neq)x\in J$. Now for any $(0\neq)y\in I
$, we have IN$(x)$=IN$(y)$, by given condition. Since $y\in $
IN$(y)$. We have $y\in $ IN$(x)\subseteq J$. Therefore $I\subseteq
J$. Thus $I=J$ which concludes $I$ is a minimal interior ideal of
$S$.

\end{proof}

The Green's relation\cite{Howie} $\mathcal{L}, \mathcal{R},
\mathcal{J}, \mathcal{H}$ on a semigroup $S$ are defined as follows:
\begin{center}
$a\mathcal{L} b$ if and only if $L(a)=L(b)$.\\
$a\mathcal{R} b$ if and only if $R(a)=R(b)$.\\
$a\mathcal{J} b$ if and only if $I(a)=I(b)$.\\
$\mathcal{H}=\mathcal{L}\cap \mathcal{R}$
\end{center}

A relation $\mathcal{I}$ on $S$ is defined as for any $a,b\in S$,
$a\mathcal{I} b$ if and only if IN$(a)$=IN$(b)$. It is thus evident
that $\mathcal{J}\subseteq  \mathcal{I}$.

\begin{theorem}\label{I CLASS}
If $I$ is an interior ideal of a semigroup $S$, then $I$ is minimal
interior ideal if and only if $I$ is an $\mathcal{I}$-class.

\end{theorem}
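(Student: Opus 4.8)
The plan is to prove both directions by combining the characterizations of minimal interior ideals already established in Theorem~\ref{minimal} and Theorem~\ref{i(a)=i(b)} with the definition of the relation $\mathcal{I}$. Throughout, recall that $a\,\mathcal{I}\,b$ means $\mathrm{IN}(a)=\mathrm{IN}(b)$, and that for any $(0\neq)a\in S$ one always has $a\in\mathrm{IN}(a)$.

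First I would handle the forward direction. Suppose $I$ is a minimal interior ideal of $S$. By Theorem~\ref{i(a)=i(b)}, $\mathrm{IN}(a)=\mathrm{IN}(b)$ for all $(0\neq)a,(0\neq)b\in I$, so all nonzero elements of $I$ are $\mathcal{I}$-related to each other; hence $I$ is contained in a single $\mathcal{I}$-class, say the $\mathcal{I}$-class $C$ of some fixed $(0\neq)a_0\in I$. To get the reverse inclusion $C\subseteq I$, take any $x\in C$, so $\mathrm{IN}(x)=\mathrm{IN}(a_0)$. By Theorem~\ref{minimal}, minimality of $I$ gives $I=\mathrm{IN}(a_0)$, hence $x\in\mathrm{IN}(x)=\mathrm{IN}(a_0)=I$. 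Thus $C\subseteq I$ and therefore $I=C$ is an $\mathcal{I}$-class.

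For the converse, suppose the interior ideal $I$ coincides with an $\mathcal{I}$-class. Then for any two nonzero elements $a,b\in I$ we have $a\,\mathcal{I}\,b$, i.e.\ $\mathrm{IN}(a)=\mathrm{IN}(b)$, so the hypothesis of Theorem~\ref{i(a)=i(b)} is satisfied and we conclude at once that $I$ is minimal. (One should note here the zero-element bookkeeping: the statements of Theorems~\ref{minimal} and~\ref{i(a)=i(b)} already quantify over $(0\neq)a$, so the $\mathcal{I}$-class in question is understood to be the class of nonzero elements, and the equivalence goes through without further fuss.)

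The routine parts are the two applications of the earlier theorems; the only genuine point requiring care is the forward direction's reverse inclusion $C\subseteq I$, where one must invoke $I=\mathrm{IN}(a_0)$ from Theorem~\ref{minimal} rather than merely $I\subseteq$ something, so that an arbitrary element of the $\mathcal{I}$-class is actually pulled back into $I$ via $x\in\mathrm{IN}(x)$. I do not anticipate any serious obstacle beyond making sure the $\mathcal{I}$-class is taken with respect to the nonzero elements of $I$, consistently with how the preceding results were phrased.
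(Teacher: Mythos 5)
Your proposal is correct and follows essentially the same route as the paper: both directions are reduced to Theorem~\ref{minimal} and Theorem~\ref{i(a)=i(b)}. In fact you are slightly more careful than the paper in the forward direction, where the paper only checks that all elements of $I$ are $\mathcal{I}$-related and omits the reverse inclusion $C\subseteq I$ that you supply via $x\in\mathrm{IN}(x)=\mathrm{IN}(a_0)=I$.
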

\begin{proof}
Let $I$ be an interior ideal of $S$. Assume that $I$ is a minimal
interior ideal of $S$. Take any $(0\neq)a,(0\neq)b\in I$. Thus by
Theorem \ref{minimal}, $I$=IN$(a)$ and $I$=IN$(b)$. Hence
IN$(a)$=IN$(b)$ implies $a\mathcal{I} b$. Thus $I$ is an
$\mathcal{I}$-class.

Conversely assume that $I$ is an $\mathcal{I}$-class. Then for all
$a,b\in I$, IN$(a)$=IN$(b)$. Hence by Theorem \ref{i(a)=i(b)}, $I$
is a minimal interior ideal of $S$.

\end{proof}

\begin{theorem}\label{regular IJ}
Let $S$ be a semigroup. If $S$ is regular then
$\mathcal{J}=\mathcal{I}$.

\end{theorem}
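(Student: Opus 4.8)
The plan is to show the two inclusions $\mathcal{J}\subseteq\mathcal{I}$ and $\mathcal{I}\subseteq\mathcal{J}$ separately. The first inclusion requires no regularity and is already recorded in the excerpt (it follows from the definitions, since $\mathrm{IN}(a)\subseteq I(a)$ always and, when $I(a)=I(b)$, one gets $\mathrm{IN}(a)=\mathrm{IN}(b)$ by a symmetric argument); so the real content is the reverse inclusion $\mathcal{I}\subseteq\mathcal{J}$, which is where regularity will be used. I would state this at the outset and then concentrate on proving: if $S$ is regular and $\mathrm{IN}(a)=\mathrm{IN}(b)$, then $I(a)=I(b)$.

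The key idea is that in a regular semigroup the principal interior ideal and the principal ideal generated by an element coincide. By Theorem~\ref{interior-regular}, in a regular semigroup a non-empty subset is an ideal if and only if it is an interior ideal; in particular $\mathrm{IN}(a)$, being an interior ideal, is actually an ideal of $S$ containing $a$, so $I(a)\subseteq\mathrm{IN}(a)$. Combined with the always-valid inclusion $\mathrm{IN}(a)\subseteq I(a)$ (immediate from $a^2\in I(a)$ and $SaS\subseteq I(a)$), this gives $\mathrm{IN}(a)=I(a)$ for every $a\in S$. This is the crucial step, and I expect the only subtlety is making sure Theorem~\ref{interior-regular} is being applied to $\mathrm{IN}(a)$ correctly — it is an interior ideal by the computation $S\,\mathrm{IN}(a)\,S = S(a\cup a^2\cup SaS)S \subseteq SaS \subseteq \mathrm{IN}(a)$, which I would spell out in one line.

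Once $\mathrm{IN}(a)=I(a)$ is established for all $a$, the rest is formal: $a\mathcal{I}b$ means $\mathrm{IN}(a)=\mathrm{IN}(b)$, hence $I(a)=I(b)$, hence $a\mathcal{J}b$; so $\mathcal{I}\subseteq\mathcal{J}$. Together with $\mathcal{J}\subseteq\mathcal{I}$ this yields $\mathcal{J}=\mathcal{I}$. I do not anticipate a genuine obstacle here — the argument is short — the only thing to be careful about is citing Theorem~\ref{interior-regular} (regular $\Rightarrow$ interior ideals are ideals) rather than re-deriving it, and handling the trivial/zero-element bookkeeping consistently with the way $\mathrm{IN}(a)$ is defined in the paper.
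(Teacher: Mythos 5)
Your proposal is correct and takes essentially the same route as the paper: both reduce the statement to Theorem~\ref{interior-regular} (in a regular semigroup, interior ideals and ideals coincide) and combine the resulting inclusion $\mathcal{I}\subseteq\mathcal{J}$ with the always-valid $\mathcal{J}\subseteq\mathcal{I}$. Your write-up merely makes explicit the identity $\mathrm{IN}(a)=I(a)$ that the paper's one-line invocation of Theorem~\ref{interior-regular} leaves implicit, which is a welcome clarification rather than a divergence.
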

\begin{proof}
Let $S$ be a regular semigroup. For any $a,b\in S$, $a\mathcal{I} b$
then IN$(a)$=IN$(b)$. Hence by Theorem \ref{interior-regular},
$I(a)=I(b)$ implies $a\mathcal{J} b$. Hence $\mathcal{I}\subseteq
\mathcal{J}$. By \ref{regular IJ}, $\mathcal{J}=\mathcal{I}$.
\end{proof}

\begin{corollary}
In an intra-regular semigroup $S$ $\mathcal{J}=\mathcal{I}$.
\end{corollary}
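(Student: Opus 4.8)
The plan is to run the same argument as in Theorem~\ref{regular IJ}, but with the appeal to Theorem~\ref{interior-regular} replaced by the corollary following it (which asserts that in an intra-regular semigroup ideals and interior ideals coincide), and then to combine the resulting inclusion with the inclusion $\mathcal{J}\subseteq\mathcal{I}$ that was already recorded in the paragraph defining $\mathcal{I}$.

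First I would establish the identification of principal objects: for every $a\in S$ one has $\mathrm{IN}(a)=I(a)$. The inclusion $\mathrm{IN}(a)\subseteq I(a)$ holds in any semigroup, since $I(a)$ is an ideal, hence by Lemma~\ref{ideal} an interior ideal containing $a$, while $\mathrm{IN}(a)$ is by definition the smallest interior ideal containing $a$; alternatively it is immediate from $\mathrm{IN}(a)=\{a\}\cup a^{2}\cup SaS$ and $I(a)=\{a\}\cup Sa\cup aS\cup SaS$. For the reverse inclusion I would use that $\mathrm{IN}(a)$ is an interior ideal of $S$ and that $S$ is intra-regular, so by the corollary to Theorem~\ref{interior-regular} the set $\mathrm{IN}(a)$ is in fact an ideal of $S$; since it contains $a$, minimality of $I(a)$ among ideals containing $a$ gives $I(a)\subseteq\mathrm{IN}(a)$. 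Hence $\mathrm{IN}(a)=I(a)$ for all $a\in S$.

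With this in hand the conclusion is formal: for $a,b\in S$, $a\,\mathcal{I}\,b$ means $\mathrm{IN}(a)=\mathrm{IN}(b)$, which by the previous step is equivalent to $I(a)=I(b)$, i.e. to $a\,\mathcal{J}\,b$. This yields $\mathcal{I}\subseteq\mathcal{J}$, and together with the already noted $\mathcal{J}\subseteq\mathcal{I}$ we obtain $\mathcal{J}=\mathcal{I}$. (One could equally well skip the use of $\mathcal{J}\subseteq\mathcal{I}$ and simply read off both inclusions from $\mathrm{IN}(a)=I(a)$.) The only step that is not purely bookkeeping is this last equality — more precisely, the passage from "$\mathrm{IN}(a)$ is an interior ideal" to "$\mathrm{IN}(a)$ is an ideal" via intra-regularity; everything else is immediate from the definitions.
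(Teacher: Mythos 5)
Your proposal is correct and follows essentially the route the paper intends: the corollary is the intra-regular analogue of Theorem~\ref{regular IJ}, obtained by replacing the appeal to Theorem~\ref{interior-regular} with its corollary (ideals and interior ideals coincide in an intra-regular semigroup) to get $\mathrm{IN}(a)=I(a)$, and then combining with the noted inclusion $\mathcal{J}\subseteq\mathcal{I}$. Your write-up in fact supplies the identification $\mathrm{IN}(a)=I(a)$ more carefully than the paper does.
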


From Theorem \ref{I CLASS} and \ref{regular IJ}, we can conclude the
following result:

\begin{theorem}
Let $S$ be a regular semigroup and $I$ be an interior ideal of $S$.
$I$ is a minimal interior ideal of $S$ if and only if $I$ is a
$\mathcal{J}$-class.
\end{theorem}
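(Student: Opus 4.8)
The plan is to derive this purely by chaining the two results the paper has already established, so essentially no new argument is needed beyond bookkeeping. First I would invoke Theorem~\ref{regular IJ}: since $S$ is regular, the relations $\mathcal{J}$ and $\mathcal{I}$ on $S$ coincide. Consequently a subset of $S$ is a $\mathcal{J}$-class if and only if it is an $\mathcal{I}$-class, because equal equivalence relations induce identical partitions of $S$.

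Next I would apply Theorem~\ref{I CLASS}, which holds for an arbitrary semigroup: an interior ideal $I$ of $S$ is a minimal interior ideal if and only if $I$ is an $\mathcal{I}$-class. Combining this equivalence with the identification $\mathcal{I}=\mathcal{J}$ from the previous step, I get: $I$ is a minimal interior ideal of $S$ if and only if $I$ is an $\mathcal{I}$-class if and only if $I$ is a $\mathcal{J}$-class. This completes the proof.

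I do not anticipate a genuine obstacle here, since every step is a citation of a prior theorem; the only point requiring a word of care is making explicit that two coinciding equivalence relations have the same equivalence classes, so that "$\mathcal{I}$-class'' may be replaced by "$\mathcal{J}$-class'' in the statement of Theorem~\ref{I CLASS}. If one wanted to be fully self-contained one could instead argue directly: for $a,b\in I$ with $I$ minimal, Theorem~\ref{minimal} gives $I=\mathrm{IN}(a)=\mathrm{IN}(b)$, and then Theorem~\ref{interior-regular} (which makes interior ideals and ideals coincide in a regular semigroup) forces $I(a)=I(b)$, i.e. $a\mathcal{J}b$; conversely a $\mathcal{J}$-class that is an interior ideal is an $\mathcal{I}$-class by the same coincidence, hence minimal by Theorem~\ref{i(a)=i(b)}. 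But the shortest route is simply to cite Theorems~\ref{I CLASS} and~\ref{regular IJ} as the paper suggests.
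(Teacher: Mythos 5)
Your proposal is correct and is precisely the paper's own argument: the paper states this theorem as an immediate consequence of Theorem~\ref{I CLASS} (minimal interior ideal if and only if $\mathcal{I}$-class) together with Theorem~\ref{regular IJ} ($\mathcal{J}=\mathcal{I}$ for regular $S$), exactly as you chain them. Your added remark that equal equivalence relations induce the same partition is the only bookkeeping step, and it is handled correctly.
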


\begin{center}
\bf{Acknowledgements}
\end{center}
The authors would like to thank the funding agency, the University
Grant Commission (UGC) of the Government of India, for providing
financial support for this research in the form of UGC-CSIR NET-JRF.

\bibliographystyle{amsplain}

\end{document}